\newtheorem{theorem}{Theorem}
\newtheorem{lemma}{Lemma}
\newtheorem{proposition}{Proposition}
\newtheorem{conjecture}{Conjecture}
\newtheorem{observation}{Observation}
\begin{document}
\title{Generation of Cycle Permutation Graphs and Permutation Snarks}

\author{\sc 
Jan GOEDGEBEUR
    \footnote{Department of Computer Science, KU Leuven Campus Kulak-Kortrijk, 8500 Kortrijk, Belgium}\;\footnote{Department of Mathematics, Computer Science and Statistics, Ghent University, 9000 Ghent, Belgium}\;,
 Jarne RENDERS\footnotemark[1]\;\\\sc
and Steven VAN OVERBERGHE\footnotemark[2]\;
\footnote{E-mail addresses:
    \{jan.goedgebeur,
    jarne.renders\}@kuleuven.be, Steven.VanOverberghe@UGent.be} 
}

\date{}

\maketitle             
\begin{center}
\begin{minipage}{125mm}
\textbf{Abstract.}
We present an algorithm for the efficient generation of all pairwise non-isomorphic \emph{cycle permutation graphs}, i.e.\ cubic graphs with a $2$-factor consisting of two chordless cycles, non-hamiltonian cycle permutation graphs and \emph{permutation snarks}, i.e.\ cycle permutation graphs that do not admit a $3$-edge-colouring. This allows us to generate all cycle permutation graphs up to order $34$ and all permutation snarks up to order $46$, improving upon previous computational results by Brinkmann et al. Moreover, we give several improved lower bounds for interesting permutation snarks, such as for a smallest permutation snark of order $6\bmod 8$ or a smallest permutation snark of girth at least $6$ and give more evidence in support of a conjecture of Goddyn. These computational results also allow us to complete a characterisation of the orders for which non-hamiltonian cycle permutation graphs exist, answering an open question by Klee from 1972, and yield many more counterexamples to conjectures by Jackson and Zhang. 

\bigskip

\textbf{Keywords.}
Cycle permutation graphs, Permutation snarks, Exhaustive generation, Canonical construction path method, Orderly generation, Non-hamiltonian 

\end{minipage}
\end{center}

\vspace{1cm}

\section{Introduction}

A \emph{cycle permutation graph} is a cubic graph containing a $2$-factor consisting of two chordless cycles. One can easily see that both of these cycles must have length $n/2$, where $n$ is the order of the graph. 

Cycle permutation graphs were first introduced by Chartrand and Harary~\cite{CH67} in 1967. They characterised the conditions under which a cycle permutation graph is planar.

In 1972, Klee~\cite{Kl72} also studied cycle permutation graphs, where he referred to them as ``generalised prisms''. He was interested in the hamiltonicity of cycle permutation graphs and asked for which orders there exist non-hamiltonian cycle permutation graphs. Using a computer search Klee determined that up to order $16$, the Petersen graph is the only non-hamiltonian cycle permutation graph and he partially solved the question by proving that for every order $n\equiv 2\bmod 4$ with $n\geq 18$ there exist non-hamiltonian cycle permutation graphs. In this manuscript, we finish this classification by completely characterising for which orders non-hamiltonian cycle permutation graphs exist. 

Cycle permutation graphs were also studied in the 80's by Shawe-Taylor and Pisanski, who asked questions about the girth of such graphs~\cite{PS82,PS81}. They construct cycle permutation graphs with an arbitrarily large girth.

We are also interested in cycle permutation graphs which are \emph{snarks}, i.e.\ non-$3$-edge colourable. In this case we call them \emph{permutation snarks}. We note that in some definitions, snarks are required to be \emph{cyclically $4$-edge-connected} and/or of \emph{girth} at least $5$,
however, this is always the case for permutation snarks of order $n > 6$. A graph is \emph{cyclically $k$-edge-connected} if the removal of fewer than $k$ edges cannot separate the graph into two components each containing a cycle and its \emph{girth} is the length of a shortest cycle. Note that a cycle permutation graph with a cycle of length $4$ is always hamiltonian and hamiltonian graphs cannot be snarks.
Moreover, an edge cut $X$ can only separate a cycle permutation graph into components if $X$ either contains all edges connecting the two chordless cycles or if $X$ contains at least two edges in one of the chordless cycles. If $\lvert X \rvert \le 3$, the former can only happen if the cycle permutation graph has order $6$ and the latter if one of the components is a single vertex.

Snarks are particularly interesting since for a lot of open conjectures it can be shown that if the conjecture is false, the smallest possible counterexamples are snarks. This has amongst others been proven for the Cycle Double Cover conjecture~\cite{Se79_2,Sz73} and Tutte’s 5-flow conjecture~\cite{Tu54}. 
A better understanding of the class of snarks can therefore lead to a better understanding of many long-standing open conjectures. Permutation snarks in particular are of interest as their additional structure makes them natural candidates for investigation and this structure in turn allows us to study this class computationally up to significantly higher orders.

It is well known that the Petersen graph is a permutation snark. In 1997, Zhang conjectured that it is the only permutation snark which is cyclically $5$-edge-connected~\cite{Zh97}. This conjecture was refuted by Brinkmann, Goedgebeur, H\"agglund and Markstr\"om~\cite{BGHM13} in 2013, when they developed a new generation algorithm for cubic graphs and snarks and generated all snarks up to 36 vertices
and determined that there are 12 cyclically $5$-edge-connected permutation snarks on $34$ vertices. They obtained counts for permutation snarks by using a filter approach, which given an input graph (given by the generation algorithm), determines whether or not it is a permutation snark. However, this method is not very efficient as in practice very few cubic graphs are cycle permutation graphs and even fewer are permutation snarks. For example on order $32$ only $0.24\%$ of the cubic graphs of girth at least~$4$ (the minimum girth of a cycle permutation graph of order $> 6$) are cycle permutation graphs. 
We improve the result by Brinkmann et al.\ by determining all permutation snarks up to order $46$, yielding many more counterexamples to Zhang's conjecture.

In 2019, Má\v{c}ajová and \v{S}koviera~\cite{MS19} gave three methods for constructing permutation snarks and used them to provide permutation snarks of cyclic connectivity $4$ and $5$ for sufficiently large order $n\equiv 2\pmod 8$. They mention that no permutation snark of order $n\equiv 6\pmod 8$ is known (an open problem posed by Brinkmann et al.~\cite{BGHM13}) 
and prove that such a smallest permutation snark -- if it exists -- must by cyclically $5$-edge-connected.  Note that cycle permutation graphs of order $n\equiv 0\pmod 4$ are trivially $3$-edge-colourable since the two cycles in the 2-factor have even length. 

We have developed two specialised algorithms
for the exhaustive generation of all pairwise non-isomorphic cycle permutation graphs of a given order. The first algorithm
is based on orderly generation~\cite{Fa78,Re78}. 
When interested in non-hamiltonian cycle permutation graphs or permutation snarks, we slightly adapt the algorithm in order to only take care of the most common isomorphisms.
This speeds up the algorithm significantly at the cost of outputting some isomorphic copies of the same graph.
We also describe a second algorithm 
using the canonical construction path method~\cite{Mc98}. Its implementation is slower than the approach using orderly generation, however, we use it as 
a verification
for the correctness of our implementation of the first algorithm.

The orderly generation approach is described in Section~\ref{sec:algorithm}, while the canonical construction path approach is described in Appendix~\ref{app:ccpm}. Their implementations can be found on GitHub~\cite{GRV24}. Variations of these algorithms allow to restrict the search to non-hamiltonian cycle permutation graphs, permutation snarks or graphs with a given lower bound on the girth in an efficient way. 
This allowed us to exhaustively generate all cycle permutation graphs and permutation snarks up to much higher orders.

The rest of this paper is organised as follows.
In Section~\ref{sec:algorithm}, we explain how the algorithms work and prove their correctness. 

In Section~\ref{sec:results}, we use the algorithms
to obtain counts for cycle permutation graphs as well as non-hamiltonian cycle permutation graphs and permutation snarks. For the latter, our algorithm allows us to generate all permutation snarks up to order $46$.
We also increase lower bounds for interesting permutation snarks such as the minimum order for a permutation snark of order $n\equiv 6\bmod 8$, which was also posed as an open problem in~\cite{BGHM13}.

In Section~\ref{sec:orders}, we finish the characterisation of the orders for which non-hamiltonian cycle permutation graphs exist, solving one of the questions by Klee~\cite{Kl72}.

In Section~\ref{sec:conjectures}, we give more counterexamples to conjectures by Zhang~\cite{Zh97} and Jackson~\cite{Ja93} and give computational evidence for a conjecture of Goddyn~\cite{GVM97}.

\section{Generation algorithms}\label{sec:algorithm}

In this section, we will present our algorithm for exhaustively generating all cycle permutation graphs of a given order $n$.
The basic idea is a backtracking approach, where we start from two cycles of length $n/2$ connected by one edge and add edges between the cycles in all possible ways. Clearly, this generates all cycle permutation graphs. However, such an approach will give a lot of isomorphic copies. One way to solve this is to keep track of all intermediate graphs and to stop adding edges whenever a graph isomorphic to a previously generated graph is found. However, this approach is very memory intensive. For example if one generates all cycle permutation graphs of order $28$ using this approach, approximately $160$ gigabytes of memory is needed. 
Therefore, we will use a different approach to avoid the generation of isomorphic copies.

In our algorithm (see Section~\ref{subsec:orderly}), we reject isomorphisms using a method based on orderly generation~\cite{Fa78,Re78}.
One labelling in each isomorphism class is determined to be (strongly) canonical and only these labellings will be accepted by the algorithm. Then every graph is generated precisely once without having to store any graphs in memory.
For efficiency's sake, in the case we want to generate non-hamiltonian cycle permutation graphs or permutation snarks,
we tolerate that some isomorphic graphs have multiple ``canonical'' labellings, which might lead to isomorphic graphs being output. See Sections~\ref{sec:non-ham}~and~\ref{sec:snarks}. We call these labellings \emph{weakly canonical}.
While this method can output isomorphic copies of the same graph, this approach is a lot faster as we can avoid performing certain expensive computations while still filtering the most common isomorphs. Moreover, since the number of graphs output in this way remains relatively small, it is feasible to obtain the exact number of graphs by filtering the isomorphic copies in a post-processing step.

We now describe these algorithms in more detail and prove their correctness. A description of the algorithm using the canonical construction path method can be found in Appendix~\ref{app:ccpm}.
Our implementation of the algorithms can be found on GitHub~\cite{GRV24} and in Appendix~\ref{app:correctness_tests} we describe how we extensively tested the correctness of the implementations in various ways.

\subsection{Orderly generation method}
\label{subsec:orderly}

The idea of this method is based on orderly generation, which was independently introduced by Farad\v zev~\cite{Fa78} 
and Read~\cite{Re78} in 1978, but had already been used by Rozenfel'd~\cite{Ro73} in 1973 for the generation of strongly regular graphs. Orderly generation defines a canonically labelled object 
for each isomorphism class and outputs only that one for each isomorphism class.
As we will see, the problem of generating cycle permutation graphs lends itself well to orderly generation and is much faster than, for example, the canonical construction path method, which we describe in Appendix~\ref{app:ccpm}. 

We first define some terminology. Let $G$ be a \emph{subcubic} graph of even order $n$, i.e.\ a graph in which all vertices have degree at most $3$. 
If $G$ contains two vertex-disjoint chordless cycles $C_1$ and $C_2$ of length $k:= n/2$, then $F := \{C_1, C_2\}$ is called a \emph{permutation $2$-factor} of $G$, i.e.\ a $2$-factor consisting of chordless cycles $C_1$ and $C_2$. We call any edges of $G$ that have one endpoint in $C_1$ and one endpoint in $C_2$ \emph{spokes} of $F$. If the vertices of degree $2$ in $G$ on one of the cycles induce a path, we call this cycle $\emph{consecutive}$ and we call $F$ a \emph{consecutive permutation $2$-factor} of $G$.  

We will represent subgraphs of cycle permutation graphs in the following way.
Let $G$ be a subcubic graph containing a permutation $2$-factor $F$ with cycles $C_1$ and $C_2$. 
Let $C_1 = u_0u_1\ldots u_{k-1}$ and $C_2 = v_0v_1\ldots v_{k-1}$. Let $[x]:= \{0,1,\ldots, x-1\}$ for the set of the first $x$ integers\footnote{Note that this might deviate slightly from how this notation is typically used.}. 
Then $G$ can be represented by the sequence $p = p_0, \ldots, p_{k-1}$ (also known as a partial permutation), where $p_i$ for $i\in [k]$ is given by
\[p: [k] \rightarrow [k] \cup \{{?}\}: i \mapsto
    \begin{cases}
    j, & \text{if $u_i$ is adjacent to $v_j$,}\\
    ?, & \text{otherwise.}
    \end{cases}
\]
We say $p$ \emph{represents} $G$ via $F$. Note that $G$ and $F$ are not sufficient to determine $p$, we also need an assignment of the vertices of $F$ to the variables $u_i, v_i$, with $i\in[k]$. Therefore, $G$ can have many such sequences representing it even via the same permutation $2$-factor $F$. 

On the other hand, a sequence $p$ gives rise to a labelled graph $G$. Label the vertices of $G$ from $0$ to $n-1$ such that if $i\in [k]$, then it has neighbours, $i-1$, $i+1$, taking indices modulo $k$, and $k + p(i)$, if $p(i)\neq {?}$. If $i\in [n]\setminus[k]$, then it has neighbours $i-1$, $i+1$, taking indices modulo $k$ and then adding $k$, and $j$ if there exists a $j\in [k]$ such that $p(j) = (i-k)$. In other words, if $p$ represents $G$ via $F$ with cycles $u_0\ldots u_{k-1}$ and $v_0\ldots v_{k-1}$, this labelling is obtained from the isomorphism mapping $u_i$ to $i$ and $v_i$ to $k+i$ for $i\in [k]$. We say this labelling, denoted by $G(p)$, is \emph{given by $p$}.

If $p$ represents $G$ via $F$, we call a sequence $p$ \emph{(strongly) canonical} if it is lexicographically smallest among all sequences representing $G$, where $i < {?}$ for all $i\in [k]$. We call it \emph{weakly canonical} if it is lexicographically smallest among all such sequences representing $G$ via $F$ (but as we will see later, being weakly canonical is independent of $F$). We emphasise that weak canonicity is a notion that is indeed weaker than (strong) canonicity by noting that the sequences representing $G$ via $F$ are a subset of all sequences representing $G$. We will use weak canonicity in Section~\ref{sec:non-ham}.

\begin{algorithm}[!htb]
    \caption{Expand($p$, $l$)}
    \label{alg:recursive_method_orderly}
    \begin{algorithmic}
        \If{$l = k$}
            \State Output cubic graph represented by $p$.
        \EndIf
        \ForAll{$x\in [k]\setminus \operatorname{im}(p)$}
            \State Define $p'$ by $p'(i) = p(i)$ for $i\in [k]\setminus\{l\}$ and $p'(l) = x$.
            \If{$p'$ is canonical} 
                \State Expand($p'$, $l+1$)
            \EndIf
        \EndFor 
    \end{algorithmic}
\end{algorithm}

A rough outline of the orderly generation algorithm's recursive method without any optimisations or technical details can be found in Algorithm~\ref{alg:recursive_method_orderly}, where we denote the image of $p$ by $\operatorname{im}(p)$. Algorithm~\ref{alg:recursive_method_orderly} outputs all non-isomorphic cycle permutation graphs of order $2k$ if started with $p = 0,{?}, {?}, \ldots, {?}$ and $l = 1$.

We now explain the algorithm in more detail. Suppose we want to generate the cycle permutation graphs of order $n$. Let $k:= n/2$. We start with $G_0$ consisting of two disjoint cycles $C_1 = u_0u_1\ldots u_{k-1}$ and $C_2=v_0v_1\ldots v_{k-1}$ and the edge $u_0v_0$. Note that this is a subgraph of any cycle permutation graph. $G_0$ is then represented by $p^1 = 0,{?}, {?}, \ldots, {?}$. Now let $G$ be a (sub)cubic supergraph of $G_0$ of order $n$ whose labelling is represented by $p^l$ with $(p^l)^{-1}([k]) = [l]$. If $G$ is cubic, then $k = l$ and we have a cycle permutation graph. It should be output and the recursion backtracks. If not, we recursively add edges from $u_l$ to $v_i$ with $i\in[k] \setminus \operatorname{im}(p)$ for all such possible $i$. Adding such an edge $u_lv_i$ gives us a new graph $G'$ whose labelling is represented by a sequence $p' = p_0, \ldots, p_{l-1}, i, {?}, \ldots, {?}$. We will now continue with $p'$ if it is canonical, i.e.\ lexicographically minimal among all sequences representing $G'$, or discard it otherwise. To clarify, we only output a cubic graph $G'$ if $p'$ is canonical. 

In this way we output exactly all non-isomorphic cycle permutation graphs of order $n$ precisely once. We describe how to determine canonicity of the sequences in Section~\ref{sss:canonicity}.

For what follows in Section~\ref{sss:finding} and for the proof of the algorithm's correctness in Section~\ref{sss:correctness}, we will use the following two results.

\begin{proposition}\label{prop:num2s}
    Let $G$ be a subcubic graph containing a permutation $2$-factor. Every permutation $2$-factor with cycles $C_1$ and $C_2$ will have the same number of degree $2$ vertices in $C_1$ and $C_2$ (namely half of all degree 2 vertices in $G$).
\end{proposition}
\begin{proposition}\label{prop:allconsecutive}
    Let $G$ be a subcubic graph containing a consecutive permutation $2$-factor with cycles $C_1$ and $C_2$. Then every permutation $2$-factor of $G$ will necessarily be consecutive and partition the degree $2$ vertices in the same way.
\end{proposition}
\begin{proof}
    Assume without loss of generality that $C_1$ is the cycle with consecutive degree 2 vertices (call this path of degree 2 vertices $P$). Then any cycle $C$ of a permutation $2$-factor containing a vertex of $P$ must contain all of $P$. But by Proposition~\ref{prop:num2s} it cannot contain any other degree 2 vertices. Therefore $C$ must also be consecutive.
\end{proof}

\subsubsection{Determining canonicity}\label{sss:canonicity}

We now describe how to determine (weak) canonicity of a given sequence $p$ representing a graph $G$ via $F$. We can generate every sequence representing $G$ via $F$ using three operations on $p$. 
We use the notation \emph{$\bmod\ k$} as a mapping sending an integer $i_1$ to an integer $i_2\in \{0,\ldots, k-1\}$ such that $i_2\equiv i_1\bmod k$.

The first operation is a rotation $T: p\mapsto q$ such that $q(i) = p((i+1)\bmod k)$. It corresponds to an isomorphism $\phi_T$ from $G(p)$ to $G(q)$ sending vertices $i\in [k]$ to $(i+1)\bmod k$ and the remaining vertices to themselves. This can be thought of as rotating the labels of the first cycle.
The second operation is a reflection $R: p\mapsto q$ such that $q(i) = p(k - 1 - i)$ for $i\in [k]$. It corresponds to an isomorphism $\phi_R$ from $G(p)$ to $G(q)$ sending vertices $i\in[k]$ to $k-1-i$ and the remaining vertices to themselves. This can be seen as reflecting the first cycle.  
The third operation is an inversion, $I: p\mapsto q$ such that $q(i) = p^{-1}(i)$ if $i\in \operatorname{im}(p)$ and ${?}$ otherwise. It corresponds to an isomorphism $\phi_I$ from $G(p)$ to $G(q)$ sending the vertices $i\in[k]$ to $k+i$ and vice versa. This can be seen as swapping the cycles. 

\begin{lemma}\label{lem:canonical}
    Let $G$ be a (sub)cubic graph containing a permutation $2$-factor.
    Then, any sequence representing $G$ via a permutation $2$-factor $F$ can be obtained from any other sequence representing $G$ via $F$ by applying some composition of the above three operations $T$, $R$ and $I$.
\end{lemma}
\begin{proof}
    Let $p$ be a sequence representing $G$ via $F$ and let $q$ be another such sequence. There exists an isomorphism $\phi$ between $G(p)$ and $G(q)$. We note that $\phi$ maps the cycles given by $0,\ldots, k-1$ and $k,\ldots, n-1$ to themselves or to each other. There are $2k$ ways a cycle can be mapped to another cycle, namely $k$ choices to map the first element and a choice of direction. Since we are mapping two cycles and might swap these, this yields $8k^2$ options. It follows that $\phi$ is some composition of $\phi_T$, $\phi_R$ and $\phi_I$. We note that a rotation or reflection of the second cycle can be obtained by swapping them first. Since these isomorphisms correspond to $T$, $R$ and $I$, the result follows.
\end{proof}
Therefore, in order to determine weak canonicity of a sequence $p$, it suffices to compare it to any other of the at most $8k^2$ sequences obtained by applying any composition of $T$, $R$ and $I$ to it. Hence, this does not depend on the given $2$-factor via which it represents $G$.

We can now check (strong) canonicity of $p$ as follows. For each permutation $2$-factor $F$ and a sequence $p^F$ representing $G$ via $F$, check if $p$ is lexicographically smaller than all rotations of $C_1$ and $C_2$ of $F$. Then check all rotations when the first cycle of $F$ is reflected ($R(p^F)$), the second is reflected ($I\circ R\circ I(p^F)$), the cycles are swapped ($I(p^F)$) or any combination of these. Once a sequence smaller than $p$ is found we can prune the search.
How we find all permutation $2$-factors is described in Section~\ref{sss:finding}.

We note that in practice, we use a slightly different notion of canonicity based on the difference lists of the sequences representing $G$ and we use many optimisations such as not checking sequences for which it can immediately be determined that they are larger. We also use optimisations that allow us to check many of these sequences in constant time. See Appendix~\ref{app:orderly_optimisations} for some details on these optimisations.

\subsubsection{\texorpdfstring{Finding all permutation {$\mathbf2$}-factors}{Finding all permutation 2-factors}}\label{sss:finding}

In order to know if a given sequence representing $G$ is canonical, we need to know all of $G$'s permutation $2$-factors. We keep track of this dynamically: we maintain a list of all the permutation $2$-factors in $G$. After adding an edge $e$, we only need to check for $2$-factors containing $e$. This already reduces the computational cost significantly.  However, one can often avoid the search for new $2$-factors altogether by checking the following condition.

\begin{proposition}\label{prop:new2factors}
    Let $G$ be a subcubic graph containing a consecutive permutation $2$-factor with cycles $C_1$ and $C_2$ such that $C_1$ is consecutive. Let $e = uv$ be a non-edge of $G$ with $u\in V(C_1)$ and $v\in V(C_2)$, such that $C_1$ is still consecutive in $G + e$.  If $v$ has a neighbour of degree $2$, any consecutive permutation $2$-factor of $G + e$ is also a consecutive permutation $2$-factor of $G$.
\end{proposition}
\begin{proof}
     Let $F$ be a consecutive permutation $2$-factor of $G$ with cycles $C_1$ and $C_2$. 
     By our choice of $e$, $F$ is also a consecutive permutation $2$-factor of $G+e$.
     By Proposition~\ref{prop:allconsecutive}, a new permutation $2$-factor $F'$, if it exists, will have two cycles $C'_1$ and $C'_2$ such that, without loss of generality, all degree $2$ vertices of $C_1$ belong to $C'_1$ and all degree $2$ vertices of $C_2$ belong to $C'_2$. 
     Note that if a degree 2 vertex lies on a cycle, both of its neighbours must lie on the same cycle.
     If $v$ has a neighbour of degree 2, then so must $u$ (by Proposition~\ref{prop:num2s}). But one of those neighbours must be in $C_1'$ and the other in $C_2'$, which means that $e$ cannot be part of a permutation 2-factor.
\end{proof}

If this condition holds, no new consecutive permutation $2$-factors will be introduced by adding $e$ to the graph. If the condition does not hold, we need to perform an exhaustive search for permutation $2$-factors containing the new edge. For this, we use a backtracking algorithm for finding an induced cycle of length $k$. If $G$ is not cubic, we initialise our cycle with the endpoints of the new edge and all vertices of the path, in which all interior vertices have degree $2$, between one of these endpoints and $u_0$. If $G$ is cubic, we can only initialise the cycle by the endpoints of the new edge. We then recursively extend this current path, by adding a neighbour of its end not introducing a chord. If this neighbour has another neighbour of degree $2$, then we need not add it as the current cycle we are constructing is already saturated with degree $2$ vertices, by Proposition~\ref{prop:num2s}. If the length of the path is less than $k$, but the ends do not have any good neighbours to add, we prune. If the length of the path is $k$ and we have a cycle, then we check whether its complement is also a cycle. If this holds, then both cycles are induced and we have found a new permutation $2$-factor. 

We remark that the complexity of this algorithm to find all permutation $2$-factors is in the worst case exponential in $n$, but computing the weakly canonical sequence is polynomial in $n$.

\subsubsection{Correctness of the algorithm}\label{sss:correctness}

We remark that even though orderly generation is a standard technique for isomorphism rejection, and even though it seems to be well-suited for the generation of cycle permutation graphs, it is actually not entirely trivial that (strong) orderly generation is correct here.
More specifically, if $F_1$ and $F_2$ are permutation $2$-factors of a subcubic graph $G$, then after adding more edges such that $F_1$ is still a permutation $2$-factor, $F_2$ could potentially have chords now.
Therefore, if we would decide that a sequence representing $G$ via $F_1$ is not canonical because a canonical sequence representing $G$ via $F_2$, we would miss graphs in the output.
It is only because we fill our initial $2$-factor in a consecutive way, and therefore Proposition~\ref{prop:allconsecutive} applies, that the technique works.

Let the restriction of $p$ to its first $l$ elements, denoted by $p\vert_l$, be defined by $p\vert_l(i) = p(i)$ if $i < l$ and $p\vert_l(i) = {?}$ otherwise.

\begin{theorem}\label{thm:orderly_correctness}
    Algorithm~\ref{alg:recursive_method_orderly} outputs every cycle permutation graph of a given order $n$ exactly once.
\end{theorem}
\begin{proof}
    Suppose that the algorithm outputs at least two distinct sequences $p$ and $p'$ representing a cycle permutation graph $G$. Then one is lexicographically smaller than the other, say $p$, which means that $p'$ is not canonical and hence would not have been output, a contradiction.

    Conversely, let $G$ be a cycle permutation graph of order $n$ and $k := n/2$. We show that it gets output by Algorithm~\ref{alg:recursive_method_orderly}. 
    Let $p^k$ be the lexicographically minimal sequence representing $G$ via any of its permutation 2-factors, say $F$.
    Now suppose for $m < k$ that $p^m$ is a canonical sequence such that $p^m = p^k\vert_m$.
    If we call Expand($p^m$, $m$), it is clear that the algorithm checks whether $p^{m+1}:= p^k\vert_{m+1}$
    is canonical. We now show it is canonical and that the algorithm calls Expand($p^{m+1}$, $m+1$) or terminates if $p^{m+1}$ represent a cubic graph.

    Assume $p:=p^{m+1}$ is not canonical, then there is some sequence $q$ representing the same graph, say $G^p$, via some $2$-factor $F'$ such that $q$ is lexicographically smaller than $p$. In particular, this means that $p(i) = q(i)\neq {?}$ for $i\in [l]$ for some $l < m+1$, but $p(l) > q(l)$. This means however that $q$ can be extended to a sequence $q^k$ representing $G$ via $F'$ such that $q^k(i) = q(i)$ for all $i$ for which $q(i) \neq {?}$. Indeed, because of Proposition~\ref{prop:allconsecutive}, $F'$ partitions the degree $2$ vertices of $G^p$ in the same way as $F$. Hence, all edges of $G$ not yet in $G^p$ must also be spokes of $F'$, since these edges are also spokes of $F$ and connect degree $2$ vertices of $G^p$. However, by construction, $q^k$ is now lexicographically smaller than $p^k$. This is a contradiction.
    Therefore, $p$ is canonical. By induction, the graph represented by $p^k$, i.e.\ $G$ will be output by the algorithm.
\end{proof}

The main bottleneck of this algorithm is dynamically updating the permutation $2$-factors of the graph under consideration. In order to drastically speed up the computations, one could consider only checking one permutation $2$-factor at the expense of generating some isomorphic copies for graphs having multiple permutation $2$-factors, but of course when one is interested in the exact counts, this is only feasible when the number of output graphs is relatively small and post-processing can be done in reasonable time and using reasonable memory
to filter the isomorphic copies and obtain the exact counts. 
In the case of non-hamiltonian cycle permutation graphs or permutation snarks the number of graphs generated was indeed relatively small, which is why we were able to apply the much faster approach that does not search for permutation $2$-factors.

This concludes the generation algorithm for (general) cycle permutation graphs. We end this section by noting that a small variation allows for the generation of cycle permutation graphs with a given lower bound on the girth, namely by, before expanding the graph with a new edge, checking whether this new edge does not introduce a small cycle with length less than the given lower bound on the girth. We have used this variation to generate cycle permutation graphs with girth at least $5$, $6$,\ldots, $9$ for the results in Section~\ref{sec:results}. 

Next, in Sections~\ref{sec:non-ham} and~\ref{sec:snarks} we describe what adaptations were made to generate only non-hamiltonanion cycle permutation graphs and permutation snarks, respectively.

\subsection{Non-hamiltonian cycle permutation graphs}\label{sec:non-ham}

Here we describe an adaptation of the previous algorithm in order to generate non-hamiltonian cycle permutation graphs. In Section~\ref{sec:snarks} we do the same for permutation snarks. Next to extra pruning conditions that are specific to non-hamiltonian graphs and snarks (which will be described later in this section), the main difference is that we use the previously mentioned weaker form of canonicity, namely, by only considering the starting permutation $2$-factor and not searching for any others. A rough outline for the recursive method of this approach (without any optimisations or technical details) is given by Algorithm~\ref{alg:recursive_method_weak_orderly}. It is the same as Algorithm~\ref{alg:recursive_method_orderly} but with the check \emph{if $p'$ is canonical} replaced by \emph{if $p'$ is weakly canonical}.

This approach, which we will denote as \emph{weak orderly generation}, will generate all cycle permutation graphs for the given order, but will include several isomorphic copies in the output. However, since the counts of non-hamiltonian cycle permutation graphs and permutation snarks remain relatively low for small orders, it is feasible for these variations to perform an isomorphism check afterwards (e.g.\ using \texttt{nauty}~\cite{MP14}) in order to determine the actual counts.

\begin{algorithm}[!htb]
    \caption{Expand\_Weak\_Orderly($p$, $l$)}
    \label{alg:recursive_method_weak_orderly}
    \begin{algorithmic}
        \If{$l = k$}
            \State Output cubic graph represented by $p$.
        \EndIf
        \ForAll{$x\in [k]\setminus \operatorname{im}(p)$}
            \State Define $p'$ by $p'(i) = p(i)$ for $i\in [k]\setminus\{l\}$ and $p'(l) = x$.
            \If{$p'$ is weakly canonical} 
                \State Expand\_Weak\_Orderly($p'$, $l+1$)
            \EndIf
        \EndFor 
    \end{algorithmic}
\end{algorithm}

A small adaptation to Algorithm~\ref{alg:recursive_method_weak_orderly} allows for the efficient generation of all non-hamiltonian cycle permutation graphs of a given order $n$. This is done by generating cycle permutation graphs of girth at least $5$ (recall that a cycle of length $4$ implies a hamiltonian cycle in these graphs) 
and by also rejecting an expansion if it leads to a hamiltonian graph. 
Note that because our algorithm works by adding edges, once a graph has a hamiltonian cycle, its descendants will also have one. 
We have adapted the program \texttt{cubhamg} included in \texttt{nauty}~\cite{MP14} to perform this hamiltonicity check as it is a very fast method for determining the hamiltonicity of (sub)cubic graphs. More details on this algorithm can be found in~\cite{BGM22}. We also apply lookaheads to avoid performing the hamiltonicity check as much as possible, as for this algorithm the hamiltonicity check is the main bottleneck.

In particular, we forbid three types of hamiltonian cycles. Given two cycles $C_1$ and $C_2$ of a permutation $2$-factor, a hamiltonian cycle has an even number of edges with an endpoint in $C_1$ and the other endpoint in $C_2$. We classify the hamiltonian cycles by how many of these edges they contain. Via lookaheads we forbid all hamiltonian cycles containing four and six of these edges. (Note that hamiltonian cycles containing two such edges are already dealt with, since they would introduce a cycle of length $4$.)

\begin{algorithm}[!htb]
    \caption{Expand\_Non-Hamiltonian($p$, $l$)}
    \label{alg:recursive_method_weak_orderly_non-ham}
    \begin{algorithmic}
        \If{$l = k$}
            \State Output cubic graph represented by $p$.
        \EndIf
        \ForAll{$x\in [k]\setminus \operatorname{im}(p)$}
            \State Define $p'$ by $p'(i) = p(i)$ for $i\in [k]\setminus\{l\}$ and $p'(l) = x$.
            \State Let $G$ be the graph represented by $p'$.
            \If{$G$ has $4$-cycle}
                \State \Return
            \EndIf
            \If{$G$ contains predetected hamiltonian cycle with $4$ or $6$ crossings}
                \State \Return
            \EndIf
            \If{$p'$ is weakly canonical and non-hamiltonian} 
                \State Predetect future hamiltonian cycles with $4$ or $6$ crossings
                \State Expand\_Non-Hamiltonian($p'$, $l+1$)
            \EndIf
        \EndFor 
    \end{algorithmic}
\end{algorithm}

For four such edges, one can look at the two previously added edges, and forbid a pair of edges from appearing consecutively. For example, suppose $C_1 = u_0u_1\ldots u_{k-1}$ and $C_2 = v_0v_1\ldots v_{k-1}$ are the two cycles of our permutation $2$-factor under consideration. If $u_xv_{x'}$ and $u_yv_{y'}$ are the last two edges which were added, we can often forbid $u_zv_{z'}$ and $u_{z+1}v_{(z+1)'}$ from being added consecutively when $v_{z'}$ and $v_{(z+1)'}$ are neighbours of $v_{x'}$ and $v_{y'}$, respectively and vice versa. However, we cannot forbid these edges when they do not form a hamiltonian cycle, but instead form two disjoint cycles spanning the whole graph. Whether this is the case can easily be checked by analysing the order of $v_{x'}, v_{y'}, v_{z'}$ and $v_{(z+1)'}$ on $C_2$. A similar method can be used to forbid edges which will lead to hamiltonian cycles with six edges between $C_1$ and $C_2$. 

The generation of non-hamiltonian cycle permutation graphs on orders $28$, $30$ and $32$ is faster by a factor of $2$ when using these lookaheads and seems to indicate a similar speedup for all orders.
Our attempts to apply the same lookaheads for a general number of such edges or simply for hamiltonian cycles with eight edges between $C_1$ and $C_2$ did not lead to a further speedup of our program. A rough outline of the recursive method can be found in Algorithm~\ref{alg:recursive_method_weak_orderly_non-ham}.

\subsection{Permutation snarks}\label{sec:snarks}
We will now explain the variation which allows us to generate permutation snarks. The main idea is to filter out graphs containing a $2$-factor in which all cycles have even length, i.e.\ \emph{an even $2$-factor}. If a cubic graph has such a $2$-factor, it is $3$-edge-colourable. We note that it is more efficient than generating non-hamiltonian cycle permutation graphs and filtering the snarks using post-processing (even though the latter is feasible for the orders we consider).

Similar ideas as in Section~\ref{sec:non-ham}, with extensive pruning in the recursive method of the algorithm, lead to an efficient algorithm for the generation of permutation snarks, however empirical tests show that a heuristic approach, with less extensive pruning (and which might allow some graphs which are not permutation snarks to be output, but which in practice never happens up to order at least $46$), is more efficient. 

Again, we generate cycle permutation graphs of girth at least $5$ (since a hamiltonian cycle is an even $2$-factor) and reject an expansion if it introduces an even $2$-factor in the graph. This is checked via backtracking in a way similar as  the lookaheads from Section~\ref{sec:non-ham} for detecting hamiltonian cycles.

If the given graph does not have an even $2$-factor, but the expansion with edge $e$ does, then $e$ must belong to the even $2$-factor. We build $2$-factors recursively. Let $F$ be the current permutation $2$-factor with cycles $C_1$ and $C_2$.  We consider spokes $e_1:= e$ and $e_1'$ whose endpoint on $C_2$ is adjacent to the endpoint of $e_1$ on $C_2$. (Note that there might be two options for $e_1'$ and we recursively check both.) Then $e_1$ and $e_1'$ will belong to the $2$-factor we are building and the edge on $C_2$ between the endpoints of $e_1$ and $e_1'$ will not. In the next step, we let $e_2'$ be a spoke such that its endpoint on $C_1$ is a neighbour of the endpoint of $e_1$ on $C_1$ and add a new pair $e_2$ and $e_2'$ to our $2$-factor as before, where $e_2$ is a spoke whose endpoint on $C_2$ is adjacent to the endpoint on $C_2$ of $e_2'$. The edge on $C_1$ between the endpoints of $e_1$ and $e_2'$ and the edge on $C_2$ between $e_2$ and $e'_2$ will not belong to the $2$-factor. We keep doing this recursively in all possible ways until the endpoint of $e_i$ on $C_1$ is adjacent to the endpoint of $e_1'$ on $C_1$. Then we have obtained a $2$-factor. If at any point we cannot find a spoke $e_i$ or $e_i'$, the search backtracks.

\begin{algorithm}[!htb]
    \caption{Expand\_Permutation\_Snark($p$, $l$)}
    \label{alg:recursive_method_weak_orderly_snark}
    \begin{algorithmic}
        \If{$l = k$}
            \State Output cubic graph represented by $p$.
        \EndIf
        \ForAll{$x\in [k]\setminus \operatorname{im}(p)$}
            \State Define $p'$ by $p'(i) = p(i)$ for $i\in [k]\setminus\{l\}$ and $p'(l) = x$.
            \State Let $G$ be the graph represented by $p'$.
            \If{$G$ has $4$-cycle}
                \State \Return
            \EndIf
            \If{$G$ contains predetected hamiltonian cycle with $4$ or $6$ crossings}
                \State \Return
            \EndIf
            \If{heuristic finds even $2$-factor of $G$}
                \State \Return
            \EndIf
            \If{$p'$ is weakly canonical} 
                \State Predetect future hamiltonian cycles with $4$ or $6$ crossings
                \State Expand\_Permutation\_Snark($p'$, $l+1$)
            \EndIf
        \EndFor 
    \end{algorithmic}
\end{algorithm}

Given a $2$-factor and all of the spokes at which it crosses from $C_1$ to $C_2$, we can determine in linear time in this number of spokes, whether or not it is even. If it is even, we do not have a snark and can prune the current expansion. If not, we continue the search for even $2$-factors. While this approach can find most even $2$-factors relatively quickly, it is not exhaustive if we backtrack whenever we find a $2$-factor. Whenever a $2$-factor is encountered but not even, we can continue the search with a new pair $e_i, e_i'$, which is not necessarily adjacent to another spoke of our $2$-factor, in all possible ways. However, empirical tests show that few extra even $2$-factors are found at the expense of a lot of extra computational work. Therefore, we backtrack whenever we find a $2$-factor. 
In practice all cycle permutation graphs up to order (at least)~$46$ obtained by running the algorithm using only this heuristic check for even $2$-factors turned out to be snarks.

Finally, we combine this heuristic for finding even $2$-factors with the predetection of hamiltonian cycles that cross four or six times which was used in Section~\ref{sec:non-ham}. Although these would have been detected by the heuristic for even $2$-factors, this small search often prunes faster than the depth-first heuristic. Adding this predetection speeds up the generation on orders 28--34 by a factor of approximately $1.7$ and on order $36$ by a factor of approximately $1.9$. (Note that while no permutation snarks exist on order $0\bmod 4$, because any permutation $2$-factor is even, 
we can still run through the search tree to get an idea of the algorithm's efficiency.) A rough outline of the recursive method for this approach can be found in Algorithm~\ref{alg:recursive_method_weak_orderly_snark}.

\section{Results}\label{sec:results}
In this section, we discuss the results of the computations that were performed using the implementations of the above mentioned algorithms. Our implementations are open source and can be found on GitHub~\cite{GRV24}. Next to proving the algorithm's correctness (cf.\ Theorem~\ref{thm:orderly_correctness}), we also performed various tests for verifying the correctness of the implementation. These are described in Appendix~\ref{app:correctness_tests}.

\subsection{Cycle permutation graphs}

\begin{table}[!htb]
\setlength\tabcolsep{1.8mm}
    \centering
    \begin{adjustbox}{max width=\textwidth}
        \begin{tabular}{c || r | r | r | r | r | r}
            Order   & $g\geq 4$                 & $g\geq 5$             & $g\geq 6$             & $g\geq 7$         & $g\geq 8$         & $g\geq 9$\\\hline
            $8$     & $2$                       & $0$                   & $0$                   & $0$               & $0$               & $0$\\
            $10$    & $4$                       & $1$                   & $0$                   & $0$               & $0$               & $0$\\
            $12$    & $10$                      & $1$                   & $0$                   & $0$               & $0$               & $0$\\
            $14$    & $28$                      & $3$                   & $0$                   & $0$               & $0$               & $0$\\
            $16$    & $123$                     & $11$                  & $1$                   & $0$               & $0$               & $0$\\
            $18$    & $667$                     & $59$                  & $0$                   & $0$               & $0$               & $0$\\
            $20$    & $4\,815$                  & $402$                 & $4$                   & $0$               & $0$               & $0$\\
            $22$    & $41\,369$                 & $3\,602$              & $9$                   & $0$               & $0$               & $0$\\
            $24$    & $411\,231$                & $37\,178$             & $84$                  & $0$               & $0$               & $0$\\
            $26$    & $4\,535\,796$             & $424\,252$            & $846$                 & $1$               & $0$               & $0$\\
            $28$    & $54\,828\,142$            & $5\,289\,603$         & $12\,597$             & $0$               & $0$               & $0$\\
            $30$    & $717\,967\,102$           & $71\,206\,645$        & $197\,921$            & $1$               & $0$               & $0$\\
            $32$    & $10\,118\,035\,593$       & $1\,027\,074\,710$    & $3\,334\,149$         & $6$               & $0$               & $0$\\
            $34$    & $152\,626\,831\,184$      & $15\,800\,380\,281$   & $58\,638\,599$        & $190$             & $0$               & $0$\\
            $36$    & ?                         & $258\,386\,596\,744$  & $1\,077\,159\,843$    & $4\,437$          & $1$               & $0$\\
            $38$    & ?                         & ?                     & $20\,642\,970\,164$   & $147\,820$        & $0$               & $0$\\
            $40$    & ?                         & ?                     & ?                     & $5\,166\,381$     & $0$               & $0$\\
            $42$    & ?                         & ?                     & ?                     & $167\,517\,630$   & $2$               & $0$\\
            $44$    & ?                         & ?                     & ?                     & $5\,265\,419\,873$& $33$              & $0$\\
            $46$    & ?                         & ?                     & ?                     & ?                 & $847$             & $0$\\
            $48$    & ?                         & ?                     & ?                     & ?                 & $21\,294$         & $0$\\
            $50$    & ?                         & ?                     & ?                     & ?                 & $1\,053\,289$     & $0$\\
            $52$    & ?                         & ?                     & ?                     & ?                 & $7\,281\,578$     & $0$\\
            $54$--$58$& ?                       & ?                     & ?                     & ?                 & ?                 & $0$\\
            $60$    & ?                         & ?                     & ?                     & ?                 & ?                 & $2$\\
            $62$    & ?                         & ?                     & ?                     & ?                 & ?                 & $61$\\
            $64$    & ?                         & ?                     & ?                     & ?                 & ?                 & $1\,654$\\
            $66$    & ?                         & ?                     & ?                     & ?                 & ?                 & $71\,213$\\
    
        \end{tabular}
    \end{adjustbox}
    \caption{We omit the single cycle permutation graph of girth 3 on order 6 from this table. The columns with $g\geq k$ indicate the counts of cycle permutation graphs with girth at least $k$ for each order.}
    \label{tab:counts_cycle_permutation_graphs}
\end{table}

We have used our algorithms to generate all cycle permutation graphs of a given order and also extended them to generate graphs with a lower bound on the girth. This can be done efficiently as the algorithm only adds edges, so we can prune as soon as we have a cycle which is smaller than the desired girth. The results obtained by our implementation of Algorithm~\ref{alg:recursive_method_orderly} can be found in Table~\ref{tab:counts_cycle_permutation_graphs}.
These graphs can also be obtained from the House of Graphs~\cite{CDG23} at \url{https://houseofgraphs.org/meta-directory/cubic} (up to the orders for which it was still feasible to store them). The runtimes which were needed to obtain these results can be found in Table~\ref{tab:runtimes_cycle_permutation_graphs} in Appendix~\ref{app:runtimes_general}. 

While the implementation of the weak orderly generation approach 
(Algorithm~\ref{alg:recursive_method_weak_orderly}) does not give exact counts, it is faster than Algorithm~\ref{alg:recursive_method_orderly} at giving an upper bound for the number of pairwise non-isomorphic cycle permutation graphs for a given order.
A comparison indicates that the factor with which the second algorithm is faster seems to grow as the order increases and  the ratio of non-isomorphic graphs versus all graphs output by this algorithm seems to increase as well. For example on order $16$, $96.85\%$ of the output graphs are non-isomorphic, while on order $34$, this is $99.21\%$. More details can be found in Table~\ref{tab:counts_orderly_generation} in Appendix~\ref{app:runtimes_general}.

\subsection{Non-hamiltonian cycle permutation graphs and permutation snarks}\label{subsec:nonham_cycle_perm_graphs}

\begin{table}[!htb]
    \centering
    \begin{tabular}{c || r | r | r || r | r | r | r }
        Order & $g\geq 5$ & $\chi' = 4$ & $\lambda_c \ge 5$ & $g\geq 6$ & $g\geq 7$ & $g\geq 8$ & $g\geq 9$\\\hline
        $10$    & $1$               & $1$       & $1$                   & $0$                   & $0$   & $0$    & $0$\\
        $12$--$16$    & $0$               & $0$       & $0$                   & $0$                   & $0$   & $0$    & $0$\\
        $18$    & $2$               & $2$       & $0$                   & $0$                   & $0$   & $0$    & $0$\\
        $20$    & $0$               & $0$       & $0$                   & $0$                   & $0$   & $0$    & $0$\\
        $22$    & $1$               & $0$       & $0$                   & $0$                   & $0$   & $0$    & $0$\\
        $24$    & $0$               & $0$       & $0$                   & $0$                   & $0$   & $0$    & $0$\\
        $26$    & $64$              & $64$      & $0$                   & $0$                   & $0$   & $0$    & $0$\\
        $28$    & $0$               & $0$       & $0$                   & $0$                   & $0$   & $0$    & $0$\\
        $30$    & $9$               & $0$       & $0$                   & $0$                   & $0$   & $0$    & $0$\\
        $32$    & $0$               & $0$       & $0$                   & $0$                   & $0$   & $0$    & $0$\\
        $34$    & $10\,778$         & $10\,771$ & $12$                  & $0$                   & $0$   & $0$    & $0$\\
        $36$    & $4$               & $0$       & $0$                   & $0$                   & $0$   & $0$    & $0$\\
        $38$    & $1\,848$          & $0$       & $0$                   & $0$                   & $0$   & $0$    & $0$\\
        $40$    & $19$              & $0$       & $0$                   & $0$                   & $0$   & $0$    & $0$\\
        $42$    & $3\,131\,740$     & $3\,128\,893$ & $736$             & $0$                   & $0$   & $0$    & $0$\\
        $44$    & $1\,428$          & $0$       & $0$                   & $0$                   & $0$   & $0$    & $0$\\
        $46$    & $678\,106$        & $0$       & $0$                   & $0$                   & $0$   & $0$    & $0$\\
        $48$    & ?                 & ?         & ?                     & $0$                   & $0$   & $0$    & $0$\\
        $50$--$54$ & ?              & ?         & ?                     & ?                     & $0$   & $0$    & $0$\\
        $56$--$60$ & ?              & ?         & ?                     & ?                     & ?     & $0$    & $0$\\
        $62$--$70$ & ?              & ?         & ?                     & ?                     & ?     & ?      & $0$\\

    \end{tabular}
    \caption{Counts of non-hamiltonian cycle permutation graphs for each order. Columns $g\geq k$ indicate counts with girth at least $k$ for each order, the column $\chi'=4$ indicates counts of permutation snarks and the column $\lambda_c\ge 5$ indicates counts of cyclically $5$-edge-connected permutation snarks. 
    }
    \label{tab:counts_nonham_cycle_permutation_graphs}
\end{table}

Certain adaptations to Algorithm~\ref{alg:recursive_method_weak_orderly} yield Algorithms~\ref{alg:recursive_method_weak_orderly_non-ham}~and~\ref{alg:recursive_method_weak_orderly_snark}, allowing for the efficient generation of all non-hamiltonian cycle permutation graphs or permutation snarks of a given order $n$, respectively.

Using these adaptations we obtain the counts of non-hamiltonian cycle permutation graphs and summarise them in Table~\ref{tab:counts_nonham_cycle_permutation_graphs}, and the following propositions, giving a partial answer to the questions asked by Klee~\cite{Kl72} concerning the hamiltonicity of cycle permutation graphs. We also increase the previously known counts of permutation snarks from order $34$ up to order $46$. The graphs are available in the meta-directory of the House of Graphs~\cite{CDG23} at \url{https://houseofgraphs.org/meta-directory/snarks}. Note that trivially there exist no permutation snarks on order $48$. 

While Algorithm~\ref{alg:recursive_method_weak_orderly} outputs isomorphic copies, we clarify that Table~\ref{tab:counts_nonham_cycle_permutation_graphs} contains the exact counts of non-hamiltonian cycle permutation graphs and permutation snarks as we filtered the isomorphic ones (using \texttt{nauty}~\cite{MP14}) whenever an adaptation of Algorithm~\ref{alg:recursive_method_weak_orderly} was applied. Since we also have an implementation of the canonical construction path method (see Algorithm~\ref{alg:recursive_method} in Appendix~\ref{app:ccpm}), we were able to independently verify the counts up to order $42$. Runtimes for the obtained counts using Algorithms~\ref{alg:recursive_method_weak_orderly_non-ham}~and~\ref{alg:recursive_method_weak_orderly_snark} and Algorithm~\ref{alg:recursive_method} are available in Table~\ref{tab:runtimes_nonham_cycle_permutation_graphs_orderly} and~\ref{tab:runtimes_nonham_cycle_permutation_graphs}, respectively, in Appendix~\ref{app:runtimes_nonham}. 

In contrast to the general case where the ratio of non-isomorphic graphs versus total graphs output by Algorithm~\ref{alg:recursive_method_weak_orderly} was close to $100\%$ and increasing, in the non-hamiltonian case, the ratio is a lot lower and can vary a lot depending on the order. For example, during the generation of non-hamiltonian cycle permutation graphs on orders $34$, $36$ and $38$, using Algorithm~\ref{alg:recursive_method_weak_orderly_non-ham}, we get $31.90\%$, $17.39\%$ and $36.07\%$, respectively. With the generation of permutation snarks (on orders where they exist), using Algorithm~\ref{alg:recursive_method_weak_orderly_snark}, the ratio seems to decrease. We have on orders $26$, $34$ and $42$, ratios $36.78\%$, $31.90\%$ and $29.04\%$, respectively. (See Table~\ref{tab:non-ham_counts_orderly_generation} in Appendix~\ref{app:runtimes_nonham} for more details.) An interesting observation is that non-hamiltonian cycle permutation graphs more often than not have multiple permutation $2$-factors, which is opposite from the general case. Since isomorphisms occur when output graphs have multiple permutation $2$-factors, this explains why there are many more isomorphic graphs output by the algorithm in this case.

It is easy to see that a permutation snark cannot have order $0\bmod 4$. Otherwise, the induced cycles of a permutation $2$-factor are even and we can colour their edges using two colours and give all other edges the third colour. Hence, a permutation snark can only exist for orders $2\bmod 4$. It was shown by Má\v{c}ájova and \v{S}koviera~\cite{MS19} that there exist (cyclically $5$-edge-connected) permutation snarks on every order $n\equiv 2\bmod 8$ for $n\geq 10$ ($n\geq 34$). However, so far no examples of order $n\equiv 6\bmod 8$ are known and in fact this was posed as an open problem in \cite[Problem~1]{BGHM13}, where all permutation snarks up to order $34$ were determined. Má\v{c}ájova and \v{S}koviera proved~\cite{MS19} that a smallest example, if it exists, must be cyclically $5$-edge-connected. From the permutation snarks, we filtered the ones which are cyclically $5$-edge-connected. These counts are found in the fourth column of Table~\ref{tab:counts_nonham_cycle_permutation_graphs}.

Using the results of Algorithm~\ref{alg:recursive_method_weak_orderly_snark} found in Table~\ref{tab:counts_nonham_cycle_permutation_graphs}, we showed the following.
\begin{proposition}
    A smallest permutation snark of order $6\bmod 8$ has order at least $54$.
\end{proposition}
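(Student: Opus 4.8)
The plan is to reduce the statement to an exhaustive computation over a short list of orders. First I would invoke the divisibility restriction established earlier in the excerpt: a permutation snark can only have order $\equiv 2 \bmod 4$, since otherwise the two induced cycles of a permutation $2$-factor are even and the graph admits a proper $3$-edge-colouring. Consequently the orders $\equiv 6 \bmod 8$ lying strictly below $54$ are exactly $6, 14, 22, 30, 38$ and $46$, and it suffices to show that none of these six orders admits a permutation snark.

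Next I would observe that every snark is non-hamiltonian, because a hamiltonian cubic graph is $3$-edge-colourable: colour the hamiltonian cycle alternately with two colours and give the remaining perfect matching the third. Hence it is enough to inspect the non-hamiltonian cycle permutation graphs of the six orders above and, for each, decide whether it is $3$-edge-colourable. For order $6$ the only cycle permutation graph is the triangular prism, which is hamiltonian and therefore not a snark; for the remaining orders I would rely on the exhaustive generation guaranteed by Theorem~\ref{thm:ccpm_correctness} (and independently verified via Theorem~\ref{thm:orderly_correctness}), using the adaptation of Section~\ref{subsec:nonham_cycle_perm_graphs} that restricts the search to non-hamiltonian cycle permutation graphs of girth at least $5$. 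The fact that a smallest such snark must be cyclically $5$-edge-connected could narrow the candidates, but the exhaustive search subsumes this restriction.

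For each generated graph I would compute the chromatic index, recording those with $\chi' = 4$ as the permutation snarks. The outcome is precisely the $\chi'=4$ column of Table~\ref{tab:counts_nonham_cycle_permutation_graphs}: the count is positive only for orders $\equiv 2 \bmod 8$ (namely $10, 18, 26, 34, 42$) and vanishes for every order $\equiv 6 \bmod 8$ up to and including $46$. Since no permutation snark exists at any order $\equiv 6 \bmod 8$ below $54$, a smallest such snark must have order at least $54$, as claimed.

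The main obstacle is purely computational: exhaustively generating all non-hamiltonian cycle permutation graphs up to order $46$ is demanding, and it is exactly this bottleneck that the specialised algorithms of Section~\ref{sec:algorithm} are designed to overcome. The mathematical content of the argument rests entirely on the exhaustiveness guarantees of Theorems~\ref{thm:ccpm_correctness} and~\ref{thm:orderly_correctness}, so correctness of the conclusion reduces to the correctness of the implementations, which is why the independent double computation (Algorithm~\ref{alg:recursive_method} and Algorithm~\ref{alg:recursive_method_orderly} agreeing up to order $42$) and the testing described in Appendix~\ref{app:correctness_tests} are essential supporting evidence.
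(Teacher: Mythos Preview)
Your proposal is correct and follows essentially the same approach as the paper: exhaustively generate all non-hamiltonian cycle permutation graphs up to order $46$ via the algorithms of Section~\ref{sec:algorithm}, verify that every such graph at the orders $\equiv 6\bmod 8$ is $3$-edge-colourable (the $\chi'=4$ column of Table~\ref{tab:counts_nonham_cycle_permutation_graphs}), and conclude. The paper's own proof is terser but logically identical, citing the same table and the same two independent algorithms.
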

We also obtained this in a second way, namely using Algorithm~\ref{alg:recursive_method_weak_orderly_non-ham}, we generated all non-hamiltonian cycle permutation graphs up to order $46$. See Table~\ref{tab:counts_nonham_cycle_permutation_graphs}. While there are $1\,848$ non-hamiltonian cycle permutation graphs of order $38$ and $879\,828$ such graphs of order $46$, using two independent algorithms, we verified that all of them are $3$-edge-colourable. 

Zhang's conjecture~\cite{Zh97} stating that the Petersen graph is the only cyclically $5$-edge-connected permutation snark was refuted by Brinkmann et al.~\cite{BGHM13}. Our search verified their $12$ counterexamples on order $34$ and found $736$ new ones on order $42$, cf.\ Table~\ref{tab:counts_nonham_cycle_permutation_graphs}. 

While Shawe-Taylor and Pisanski showed in~\cite{PS82} that the girth of cycle permutation graphs can be arbitrarily large, no permutation snarks of girth $6$ or higher are known. Our computational results from Table~\ref{tab:counts_nonham_cycle_permutation_graphs} imply the following. 

\begin{proposition}
    The smallest non-hamiltonian cycle permutation graph of girth at least $6$ has order at least $50$. The smallest such graph of girth at least $7$ has order at least $56$. The smallest such graph of girth at least $8$ has order at least $62$. The smallest such graph of girth at least $9$ has order at least $72$.
\end{proposition}

Note that as a corollary, we can replace ``non-hamiltonian cycle permutation graph'' in the above proposition with ``permutation snark''.

\section{Orders of non-hamiltonian cycle permutation graphs} \label{sec:orders}
In his 1972 paper~\cite{Kl72}, Klee studied two questions involving the non-hamiltonicity of cycle permutation graphs, attributed to Ralph Willoughby. The first one asks which permutation graphs admit a hamiltonian cycle. The second asks for which orders $n$ there exist non-hamiltonian cycle permutation graphs. Klee answered this second question partially by proving that for all $n\equiv 2\bmod 4$ there is a non-hamiltonian cycle permutation graph if and only if $n$ is neither $6$ nor $14$. We complete the characterisation of the orders for which non-hamiltonian cycle permutation graphs exist and thereby solve Klee's second question. 

To complete the characterisation, we will briefly reintroduce Klee's notation, but opt for $0$-indexing of the permutations. For more details on Klee's ideas we refer to his original paper~\cite{Kl72}.

One can define cycle permutation graphs of order $n=2k$ given a $k$-permutation. Let $G(\pi)$ be the cubic graph consisting of two disjoint $k$-cycles $x_0x_1\ldots x_{k-1}$ and $y_0y_1\ldots y_{k-1}$ and edges $x_iy_{\pi(i)}$ for $0\leq i\leq k-1$.
Let $G'(\pi)$ be the subgraph of $G(\pi)$ where we have removed the edges $x_{k-1}x_0$ and $y_{k-1}y_0$. 

An $E$-path in the graph $G'(\pi)$ is a path for which its ends are both in the set $E=\{x_0, x_{k-1}, y_0, y_{k-1}\}$. A hamiltonian $E$-pair is a pair of disjoint $E$-paths containing all vertices of $G'(\pi)$. The permutation $\pi$ is called \emph{good}
if it either has a hamiltonian $E$-path with endpoints $x_i$ and $y_j$ for $i,j\in\{0,k-1\}$ or if it has a hamiltonian $E$-pair such that one of the disjoint paths has endpoints $x_i$ and $y_j$ for $i,j\in\{0,k-1\}$ and the other has endpoints $x_{k-1-i}, y_{k-1-j}$. Otherwise, $\pi$ is \emph{bad}.

If $\pi$ is a $k$-permutation with $\pi(k-1) = k-1$, then we can naturally restrict $\pi$ from $[k]$ to $[k-1]$. We denote this permutation by $\overline{\pi}$, i.e.\ $\overline{\pi}(i) = \pi(i)$ for $i\in [k-1]$. 

Given $m$ $k_i$-permutations, for $0\leq i < m$, we can concatenate them to obtain a new $(\sum_{i=0}^{m-1} k_i)$-permutation $\pi:= (\pi_0,\ldots, \pi_{m-1})$, where $$\pi(i) = \sum_{j=0}^{l-1} k_j + \pi_i(i - \sum_{j=0}^{l-1} k_j), \text{ for } \sum_{j=0}^{l-1} k_j \le i < \sum_{j=0}^{l} k_j. $$

We will use the following propositions.

\begin{proposition}[Klee~\cite{Kl72}]\label{prop:Klee1}
    If $\pi$ is a $k$-permutation with $\pi(k)=k\geq 3$, then $G(\pi)$ is non-hamiltonian if and only if the $(k-1)$-permutation $\overline{\pi}$ is bad.
\end{proposition}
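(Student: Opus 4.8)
The plan is to establish the logically equivalent statement that $G(\pi)$ is hamiltonian if and only if $\overline{\pi}$ is good, proving it via a direct correspondence between hamiltonian cycles of $G(\pi)$ and the two kinds of spanning structures of $G'(\overline{\pi})$ that appear in the definition of ``good''. The key preliminary observation, used in both directions, is that the hypothesis $\pi(k-1)=k-1$ (in our $0$-indexing) means $x_{k-1}$ and $y_{k-1}$ are joined by the rung $x_{k-1}y_{k-1}$, and that deleting these two vertices from $G(\pi)$ turns each $k$-cycle into the corresponding $(k-1)$-path $x_0\cdots x_{k-2}$ and $y_0\cdots y_{k-2}$, destroys exactly that one rung, and leaves every other rung intact (since $\pi(i)\neq k-1$ for $i\neq k-1$). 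Thus $G(\pi)-\{x_{k-1},y_{k-1}\}$ is precisely $G'(\overline{\pi})$, and the relevant endpoint set is $E=\{x_0,x_{k-2},y_0,y_{k-2}\}$; that is, the good/bad definition is applied to $\overline{\pi}$ with $k$ replaced by $k-1$. I would record this identification first.

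For the forward direction I would take a hamiltonian cycle $H$ of $G(\pi)$ and split according to whether $H$ uses the rung $x_{k-1}y_{k-1}$. If it does, then at $x_{k-1}$ exactly one of the cycle edges $x_{k-1}x_0$, $x_{k-1}x_{k-2}$ lies on $H$, and likewise at $y_{k-1}$; deleting $x_{k-1},y_{k-1}$ leaves a hamiltonian path of $G'(\overline{\pi})$ with one endpoint $x_i$ ($i\in\{0,k-2\}$) and one endpoint $y_j$ ($j\in\{0,k-2\}$), which is exactly a hamiltonian $E$-path of the first kind. If $H$ avoids the rung, then by the degree-$2$ condition $H$ must use all four edges $x_{k-1}x_0$, $x_{k-1}x_{k-2}$, $y_{k-1}y_0$, $y_{k-1}y_{k-2}$; deleting $x_{k-1},y_{k-1}$ then cuts the cycle into two vertex-disjoint paths spanning $G'(\overline{\pi})$, i.e.\ a hamiltonian $E$-pair.

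The step I expect to be the crux is verifying that this $E$-pair has exactly the \emph{complementary} endpoint pattern required by the definition: one path with endpoints $x_i,y_j$ and the other with endpoints $x_{k-2-i},y_{k-2-j}$. This is a purely structural fact about deleting two non-adjacent vertices of a cycle. Tracing the cyclic order of $x_{k-1}$ and $y_{k-1}$ along $H$, the two resulting arcs each inherit exactly one of the former $H$-neighbours $\{x_0,x_{k-2}\}$ of $x_{k-1}$ and exactly one of the former $H$-neighbours $\{y_0,y_{k-2}\}$ of $y_{k-1}$; hence one path runs from an $x$-end to a $y$-end and the other from the two remaining ends. Since $\{0,k-2\}$ is closed under $i\mapsto k-2-i$, the two $x$-ends are $x_i$ and $x_{k-2-i}$ and the two $y$-ends are $y_j$ and $y_{k-2-j}$, which is precisely the required pairing.

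For the converse I would check that each good structure lifts to a hamiltonian cycle. A hamiltonian $E$-path $P$ with endpoints $x_i,y_j$ ($i,j\in\{0,k-2\}$) closes into the cycle that runs from $x_{k-1}$ to $x_i$, along $P$ to $y_j$, then to $y_{k-1}$ and back to $x_{k-1}$ through the rung; this uses the edges $x_{k-1}x_i$ and $y_{k-1}y_j$, which exist precisely because $i,j\in\{0,k-2\}$. A hamiltonian $E$-pair with complementary endpoints $\{x_i,y_j\}$ and $\{x_{k-2-i},y_{k-2-j}\}$ closes up by letting $x_{k-1}$ join its two $x$-ends $x_i,x_{k-2-i}$ (which are $x_0$ and $x_{k-2}$ in some order) and $y_{k-1}$ join its two $y$-ends, using only genuine edges of $G(\pi)$ and not the rung. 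Since every hamiltonian cycle falls into exactly one of the two cases and each good structure lifts back, the correspondence is two-sided, giving ``hamiltonian iff good'' and hence the stated ``non-hamiltonian iff bad''. The only remaining work is the bookkeeping of indices under the $k\mapsto k-1$ relabelling, which is error-prone but conceptually routine.
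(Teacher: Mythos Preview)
The paper does not give its own proof of this proposition; it is quoted from Klee's paper and used as a black box. Your argument is correct and is essentially the natural one: the identification $G(\pi)\setminus\{x_{k-1},y_{k-1}\}=G'(\overline{\pi})$ is exactly right (the hypothesis $\pi(k-1)=k-1$ guarantees that the only rung incident with $\{x_{k-1},y_{k-1}\}$ is $x_{k-1}y_{k-1}$), and the case split on whether a hamiltonian cycle uses that rung cleanly produces either the hamiltonian $E$-path or the hamiltonian $E$-pair with the complementary endpoint pattern, both constructions being reversible. The one step worth stating a touch more explicitly is the complementary-endpoint claim in the second case: since $x_{k-1}$ and $y_{k-1}$ are non-adjacent on $H$, deleting them splits $H$ into two arcs, and each arc necessarily has one former $H$-neighbour of $x_{k-1}$ and one of $y_{k-1}$ as its endpoints; this is precisely what you say, and it rules out the a~priori possibility that one arc has both $x$-ends. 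With that observation made explicit, the proof is complete.
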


\begin{proposition}[Klee~\cite{Kl72}]\label{prop:Klee2}
    Suppose that $\pi_i$ is a $k_i$ permutation for $0\leq i < m$ and that the following three conditions hold:
    \begin{enumerate}
        \item for each $i$, either $k_i = 1$ or $\pi_i$ is bad;
        \item there is an even (possibly zero) number of $i$'s for which $k_i = 1$;
        \item $k_0\neq 1$, $k_{m-1}\neq 1$ and if $k_i = k_j = 1$ we have that $j\neq i+1$.
    \end{enumerate}
    Then the concatenation $\pi = (\pi_0,\ldots, \pi_{m-1})$ is bad.
\end{proposition}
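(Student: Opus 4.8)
The plan is to argue by contradiction, exploiting that in the concatenation $\pi$ the open ladder $G'(\pi)$ splits into a linear chain of the sub-ladders $G'(\pi_i)$. Writing $a_i=\sum_{j<i}k_j$ and $b_i=a_i+k_i-1$ for the first and last index of block $i$, the only edges joining block $i$ to block $i+1$ are the two horizontal edges $x_{b_i}x_{a_{i+1}}$ and $y_{b_i}y_{a_{i+1}}$; thus consecutive blocks are separated by a cut of exactly two edges, one on the $x$-rail and one on the $y$-rail. First I would reformulate goodness as the existence of one of six \emph{corner-to-corner} covers of $G'(\pi)$: the four single hamiltonian $E$-paths joining one of $x_0,x_{k-1}$ to one of $y_0,y_{k-1}$, and the two hamiltonian $E$-pairs $\{x_0\!-\!y_0,\;x_{k-1}\!-\!y_{k-1}\}$ and $\{x_0\!-\!y_{k-1},\;x_{k-1}\!-\!y_0\}$; the permutation is \emph{bad} precisely when none of these exist.

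Next I would organise the argument as an induction on $m$ built from two merge lemmas. The first handles two adjacent non-trivial bad blocks. Suppose $\pi$ is good, witnessed by a cover $\mathcal H$. Restricting $\mathcal H$ to a block gives a system of vertex-disjoint paths covering that block whose endpoints are exactly the block-side vertices of the used cut edges, together with any genuine endpoints of $\mathcal H$ lying in the block; since $x_0,y_0$ and $x_{k-1},y_{k-1}$ are themselves corners of the first and last blocks, every such endpoint lies in the block's corner set $\{x_{a_i},x_{b_i},y_{a_i},y_{b_i}\}$. For two non-trivial blocks a short case check over the six global patterns then shows that the induced cover on at least one of the two factors is again corner-to-corner, i.e.\ that factor is good --- contradicting condition~$1$. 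This merge lemma lets the induction eliminate the last block whenever no rung sits next to it.

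The rungs are handled by the second merge lemma together with a strengthening of the induction hypothesis. A size-$1$ block at internal position $t$ is a single rung $x_ty_t$; since by condition~$3$ no rung lies at an end, $x_t,y_t\notin E$, so both have degree $2$ in $\mathcal H$ and the rung behaves as a flexible four-port connector that either passes the two rails straight through, caps one side off, or \emph{crosses} the $x$- and $y$-rails. A single cross can convert a forbidden $x$--$x$/$y$--$y$ induced pattern into an admissible one, which is exactly why a lone rung may spoil badness; I would therefore prove by mutual induction a pair of statements --- one concluding ``bad'', one concluding the reflected ``twisted'' variant --- with each rung toggling between them. Conditions~$2$ and~$3$ (an even number of rungs, none adjacent, none at an end) then guarantee the crosses pair up and cancel, so the corner pattern delivered to each non-trivial block is genuinely a good pattern, forcing some $\pi_i$ to be good and completing the contradiction.

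I expect the main obstacle to be the endpoint bookkeeping in the $E$-pair case, where two paths weave through the chain and one must track not only which rails are used at each cut but also how the two paths match the reflected pattern $x_{k-1-i},y_{k-1-j}$ demanded by goodness. Verifying that each rung toggles this reflection exactly once, and that an even number of rungs therefore restores the original obstruction, is the delicate step; by contrast the single-$E$-path case and the degree-and-parity relations at the cuts are routine.
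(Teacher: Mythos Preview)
The paper does not prove this proposition; it is quoted verbatim from Klee~\cite{Kl72} and used as a black box in Section~\ref{sec:orders}. There is therefore no proof in the paper to compare your attempt against.

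That said, your plan is the natural one and is essentially how Klee's argument runs: decompose $G'(\pi)$ along the $2$-edge cuts $\{x_{b_i}x_{a_{i+1}},\,y_{b_i}y_{a_{i+1}}\}$ between consecutive blocks, assume a good witness exists, and analyse its restriction block by block. Your identification of the size-$1$ blocks as ``crossings'' that can swap the $x$- and $y$-rails, and hence must occur in pairs to preserve the obstruction, is exactly the reason condition~2 is needed; likewise condition~3 prevents two adjacent crossings from collapsing into something uncontrolled and keeps the outermost blocks non-trivial so that the global endpoints really are corners of those blocks. The mutual-induction device (carrying a ``twisted-bad'' companion statement that each rung toggles) is a clean way to organise the parity bookkeeping.

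The only caution is that your first merge lemma as stated (``the induced cover on at least one of the two factors is again corner-to-corner'') is not quite what the induction needs: to peel off the last block you want either that $\pi_{m-1}$ is good (immediate contradiction) or that the restriction of the witness to the first $m-1$ blocks is itself a good witness for $(\pi_0,\dots,\pi_{m-2})$, and you must check that the shortened concatenation still satisfies the three hypotheses---which is why one sometimes peels two or three blocks at once when a rung sits in penultimate position. You clearly see this issue in your discussion of the $E$-pair case; it is bookkeeping rather than a genuine gap, but it does need to be written out.
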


We can now finish the characterisation of the orders.
\begin{proposition}
    There is a non-hamiltonian cycle permutation graph of order $n$ if and only if $n$ is even and $n \in \{10,18,22,26,30\}$ or $n\geq 34$.
\end{proposition}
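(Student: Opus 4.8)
The plan is to split the statement by the residue of $n$ modulo $4$ and to translate everything, via Proposition~\ref{prop:Klee1}, into the existence of \emph{bad} permutations of prescribed sizes. The dictionary is this: a bad $m$-permutation $\overline{\pi}$, extended to an $(m+1)$-permutation $\pi$ by the fixed point $\pi(m)=m$, yields through $G(\pi)$ a non-hamiltonian cycle permutation graph of order $2m+2$, and conversely such structured non-hamiltonian graphs correspond to bad permutations. Since $m=n/2-1$ has parity opposite to $n/2$, orders $n\equiv 2\bmod 4$ require bad permutations of even size while orders $n\equiv 0\bmod 4$ require bad permutations of odd size. Every cycle permutation graph has even order, so odd $n$ are immediately excluded, settling half of the necessity direction at once.

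For $n\equiv 2\bmod 4$ the classification is exactly Klee's theorem: a non-hamiltonian cycle permutation graph exists if and only if $n\neq 6,14$, which accounts for $n\in\{10,18,22,26,30\}$ and for all $n\geq 34$ with $n\equiv 2\bmod 4$. It remains to treat $n\equiv 0\bmod 4$. For the necessity direction here I would read off from Table~\ref{tab:counts_nonham_cycle_permutation_graphs} that there are no non-hamiltonian cycle permutation graphs for $n\in\{8,12,16,20,24,28,32\}$; equivalently, there is no bad permutation of odd size $3,5,\ldots,15$, so the smallest admissible odd size is $17$, and this is what forces the threshold.

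For the sufficiency direction with $n\equiv 0\bmod 4$ and $n\geq 36$ the engine is Proposition~\ref{prop:Klee2}: the concatenation of two bad permutations is again bad (take two blocks and no singleton blocks, so that all three conditions hold trivially), hence the set of sizes of bad permutations is closed under addition. I would then exhibit three explicit bad permutations: one of size $4$, coming from the Petersen graph after relabelling its standard pentagon model so that $\pi(4)=4$ (so that $\overline{\pi}=(1,3,0,2)$ is bad), and two of odd sizes $17$ and $19$, obtained from structured non-hamiltonian graphs of orders $36$ and $40$ produced by the computation. Concatenating copies of the size-$4$ block with one size-$17$ block realises every odd size $\equiv 1\bmod 4$ that is $\geq 17$, while using one size-$19$ block realises every odd size $\equiv 3\bmod 4$ that is $\geq 19$; together the progressions $17+4\mathbb{Z}_{\geq 0}$ and $19+4\mathbb{Z}_{\geq 0}$ exhaust all odd integers $\geq 17$. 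Translating back through Proposition~\ref{prop:Klee1} then gives a non-hamiltonian cycle permutation graph of every order $2m+2$ with $m$ odd and $m\geq 17$, i.e.\ of every order $n\equiv 0\bmod 4$ with $n\geq 36$, completing the argument.

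The genuinely new content, and the main obstacle, is the production of the odd-size bad permutations. These lie entirely outside Klee's even-order analysis, and in fact Proposition~\ref{prop:Klee2} cannot manufacture them: an even number of singleton blocks together with blocks of even size always yields an even total size, so an odd seed is indispensable and must be found directly. The computation both supplies such seeds (sizes $17$ and $19$) and, through the non-existence data of Table~\ref{tab:counts_nonham_cycle_permutation_graphs}, certifies that none exists below size $17$. Two further points need care. First, a single odd seed is insufficient, since padding one odd block by blocks of size $4$ reaches only one of the two odd residue classes modulo $4$; one must supply \emph{both} sizes $17$ and $19$. Second, one must verify that the chosen base permutations really are bad and really are structured in the sense of Proposition~\ref{prop:Klee1}. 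This is a finite verification, but it is precisely this bookkeeping, dovetailing with the finite non-existence results, that makes the constructive range meet the threshold exactly at order $34$ (for $n\equiv 2\bmod 4$) and $36$ (for $n\equiv 0\bmod 4$).
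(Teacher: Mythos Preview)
Your proposal is correct and follows essentially the same route as the paper: split by residue modulo $4$, cite Klee for $n\equiv 2\bmod 4$, read off non-existence for small $n\equiv 0\bmod 4$ from Table~\ref{tab:counts_nonham_cycle_permutation_graphs}, and build bad permutations of all odd sizes $\geq 17$ by concatenating a size-$4$ Petersen block with seeds of sizes $17$ and $19$ extracted via Proposition~\ref{prop:Klee1} from the computed non-hamiltonian graphs of orders $36$ and $40$. Two minor remarks: your concatenation scheme is in fact slightly cleaner than the paper's (which also uses an auxiliary pattern with two singleton $\pi_1$ blocks that you do not need), and your caution about the seeds being ``structured'' is unnecessary, since any cycle permutation graph of order $2k$ can be relabelled so that $\pi(k-1)=k-1$, whence non-hamiltonicity alone guarantees that the restriction $\overline{\pi}$ is bad by the biconditional in Proposition~\ref{prop:Klee1}.
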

\begin{proof}
    The orders $n\equiv 2\bmod 4$ were already characterised by Klee. By the counts of Table~\ref{tab:counts_nonham_cycle_permutation_graphs} we see that all cycle permutation graphs of order $n\equiv 0\bmod 4$ and $n\leq 32$ are hamiltonian. 

    Let $\pi_{10}$ be a bad $4$-permutation of the Petersen graph (the non-hamiltonian cycle permutation graph on order $10$), which exists due to Proposition~\ref{prop:Klee1}. Let $G_1$ be a non-hamiltonian cycle permutation graph on order $36$ and $\pi_{36}$ a bad $17$-permutation obtained from this graph, let $G_2$ be a cycle permutation graph on order $40$ and $\pi_{40}$ be a bad $19$-permutation of this graph and let $\pi_{1}$ be the trivial $1$-permutation. Such a $G_1$ and $G_2$ exist, see Table~\ref{tab:counts_nonham_cycle_permutation_graphs}.

    By Proposition~\ref{prop:Klee2}, we obtain that all permutations in the following sequence are bad. 
    $$\pi_{36}, \pi_{40}, (\pi_{36},\pi_{10}), (\pi_{40}, \pi_{10}), (\pi_{36},\underbrace{\pi_{10},\dots,\pi_{10}}_{k \text{ times}}), (\pi_{36},\pi_1,\pi_{10}, \pi_1, \underbrace{\pi_{10},\dots,\pi_{10}}_{l \geq 1\text{ times}})$$

    By Proposition~\ref{prop:Klee1}, these bad permutations imply the existence of a non-hamiltonian cycle permutation graph on order, respectively, $36, 40, 44, 48, 36+8k, 48+8l$, where $l\geq 1$, which finishes the proof.
\end{proof}

\section{Conjectures regarding permutation snarks}\label{sec:conjectures}

In~\cite{BGHM13}, Brinkmann et al.\ refuted Zhang's conjecture~\cite{Zh97} stating that the Petersen graph is the only cyclically $5$-edge-connected permutation snark, by finding $12$ counterexamples on order $34$. These examples turned out to also be counterexamples to other conjectures. (Cf.\ Refuted conjectures 5.17-5.24 in~\cite{BGHM13}.) So it seems worthwhile to use our new list of permutation snarks on 42 vertices to investigate these and related conjectures.

A \emph{cycle double cover} (usually abbreviated CDC) of a graph $G$ is multiset of cycles such that each edge lies in exactly two of the cycles.

The $12$ permutation snarks found by Brinkmann et al.\ are all known counterexamples to the following problem by Jackson, which was later posed as a conjecture by Zhang~\cite{Zh97}. 

\begin{conjecture}[Jackson \cite{Ja93}, Zhang \cite{Zh97}, refuted]\label{Conj:Jackson}
    Let $G$ be a cyclically $5$-edge-connected cubic graph and $\mathcal{D}$ be a set of pairwise disjoint cycles of $G$. Then $\mathcal{D}$ is a subset of a CDC, unless $G$ is the Petersen graph.
\end{conjecture}
Jackson~\cite{Ja93} notes that the conjecture is true in the case of $3$-edge-colourable cubic graphs and that a smallest counterexample must be cyclically $5$-edge-connected. 

The $12$ cyclically $5$-edge-connected permutation snarks on order {}$34$ are also counterexamples to the following weaker form of Jackson's conjecture, posed by Zhang~\cite{Zh97}.
\begin{conjecture}[Zhang \cite{Zh97}, refuted]\label{Conj:Zhang}
    Let $G$ be a cycle permutation graph with $C_1$ and $C_2$ the cycles of a permutation $2$-factor. If $G$ is cyclically $5$-edge-connected and there is no CDC which includes both $C_1$ and $C_2$, then $G$ must be the Petersen graph.
\end{conjecture}

We implemented a computer program for testing these two conjectures, which can be found in the same GitHub repository as our generation algorithm~\cite{GRV24}. 
Our computations resulted in the following observation. 
\begin{observation} \label{obs:conj_jackson}
    Out of the $736$ cyclically $5$-edge-connected permutation snarks of order $42$, there are exactly $448$ that have a permutation $2$-factor which cannot be part of any CDC and are thus counterexamples to Conjecture~\ref{Conj:Zhang} (and consequently also to Conjecture~\ref{Conj:Jackson}). In the remaining $228$ cyclically $5$-edge-connected permutation snarks on order $42$ any set of pairwise disjoint cycles is a subset of some CDC, so none are counterexamples to Conjecture~\ref{Conj:Jackson}.
\end{observation}
The set of counterexamples from Observation~\ref{obs:conj_jackson} as well as the set of non-counterexamples can also be found on GitHub~\cite{GRV24}.

A cycle $C$ in a cycle permutation graph $G$ is \emph{removable} for a permutation $2$-factor $F$ if $G - E(C)\cap E(F)$ is $2$-connected. In~\cite{GVM97}, the authors pose the following conjecture by Goddyn, which remains open.
\begin{conjecture}[Goddyn \cite{GVM97}]
    The only cycle permutation graph without removable cycles for any permutation $2$-factor is the Petersen graph. 
\end{conjecture}
They note that the conjecture holds true for $3$-edge-colourable cycle permutation graphs and cycle permutation graphs without a Petersen minor. Brinkmann et al.\ determined in~\cite{BGHM13} that the Petersen graph is the only permutation snark up to order $36$ with no removable cycles. They observed that on order $34$, the permutation snarks with the most number of removable cycles are the cyclically $5$-edge-connected ones. 

We wrote a program for counting the number of removable cycles in a cycle permutation graph and computed this for all permutation snarks up to order $42$. Its implementation can be found on GitHub~\cite{GRV24} as a feature of our filter program. 
The results of these computations are summarised in the following observation.

\begin{observation}
The Petersen graph is the only permutation snark on $n \le 48$ vertices with no removable cycles. The minimum number of removable cycles in a permutation graph on order $42$ is three. The 736 cyclically $5$-edge-connected permutation snarks of order 42 have between 80 and 205 removable cycles.
There exist 1\,121 permutation snarks on order $42$ which have at least $206$ removable cycles and are not cyclically $5$-edge-connected.
\end{observation}
This final observation is in contrast with the $34$-vertex case, where the cyclically $5$-edge-connected examples had the highest amount of removable cycles.

\section*{Acknowledgements}

The authors thank Gunnar Brinkmann, Edita Má\v cajová and Martin \v Skoviera for their valuable insights and contributions. 

This research of Jan Goedgebeur and Jarne Renders was supported by Internal Funds of KU Leuven and by an FWO grant with grant number G0AGX24N. Several of the computations for this work were carried out using the supercomputer infrastructure provided by the VSC (Flemish Supercomputer Center), funded by the Research Foundation Flanders (FWO) and the Flemish Government.

%
%
%

\printbibliography[title=\bibname]

\clearpage

\appendix

\section{Canonical construction path method}\label{app:ccpm}
In this section we describe how to use the canonical construction path method,
which is a generic method for the exhaustive isomorphism-free generation of graphs introduced by McKay in~\cite{Mc98},
in order to generate all pairwise non-isomorphic cycle permutation graphs.
This method (if used correctly) guarantees that every graph
is generated exactly once without having to store (and compare) all graphs in the memory
during the generation process.
Using similar ideas as in Section~\ref{sec:non-ham}, the algorithm can also be adapted to only generate non-hamiltonian permutation graphs.

An \emph{expansion} is an operation which constructs a larger graph from a given smaller one. The reverse operation is called a \emph{reduction}. To use the canonical construction path method we will need to define a \emph{canonical reduction} which is unique up to isomorphism and its inverse operation will then be the \emph{canonical expansion}. Since our expansions will consist of adding a non-edge to the graph, we say two expansions applied to the same graph $G$ are \emph{equivalent} if there exists an automorphism of $G$ mapping the respective non-edges to each other. This gives us, for each graph to which we apply expansions, classes of equivalent expansions. 

When applying an expansion we need to determine whether or not we will accept the newly generated graph and keep adding edges to it or if we discard it and choose another edge to add in the smaller graph. In order to avoid isomorphic copies, we should accept every non-isomorphic intermediate graph exactly once. This can be done by following these two rules: 
\begin{enumerate}
    \item \label{rule:CCPM_1} Only accept a graph if it was obtained by canonical expansion.
    \item \label{rule:CCPM_2} For every graph $G$ to which expansions will be applied, only perform one expansion from each equivalence class of expansions.
\end{enumerate}

In our case there is only one expansion operation, that is: adding an edge between pairs of specific vertices of degree $2$. 

Let $G$ be a spanning subgraph of a cycle permutation graph which has a consecutive permutation $2$-factor $F$ with induced cycles $C_1$ and $C_2$. For $i\in\{1,2\}$, if the degree $3$ vertices on $C_i$ induce a path $P$, let $S_i$ be the set of vertices of degree $2$ which are adjacent to $P$ on $C_i$ or let $S_i$ be empty otherwise. We consider a pair of vertices \emph{eligible} if one of the vertices lies in such a set $S_i$ and the other is a vertex of degree $2$ on $C_{3-i}$. We will apply the expansion to any eligible pair of vertices for any consecutive permutation $2$-factor of our graph.

Note that the algorithm would also work if we add an edge between any pair of degree $2$ vertices in which one lies on $C_1$ and the other on $C_2$. However, restricting the algorithm to eligible pairs drastically reduces the number of times the recursive algorithm branches and hence makes the implementation a lot more efficient.

The high level pseudocode for the recursive method of the algorithm can be found in Algorithm~\ref{alg:recursive_method}. 

\begin{algorithm}[!htb]
    \caption{Expand\_CCPM($G$)}
    \label{alg:recursive_method}
    \begin{algorithmic}
        \If{$G$ is cubic}
            \State Output $G$
        \EndIf
        \State Determine all eligible pairs of vertices.
        \State Determine the orbits of all such pairs. 
        \ForAll{eligible pairs $(u,v)$}
            \State // Cf.\ Rule~\ref{rule:CCPM_2} of the canonical construction path method.
            \If{$(u,v)$ is the representative of its orbit} 
                \If{adding edge $uv$ to $G$ is a canonical expansion} // Cf.\ Rule~\ref{rule:CCPM_1}.
                    \State Expand\_CCPM($G+uv$)
                \EndIf
            \EndIf
        \EndFor 
    \end{algorithmic}
\end{algorithm}

We now explain the algorithm in more detail. Suppose we want to generate the cycle permutation graphs of order $n$. We start with $G_0$ consisting of two cycles $C_1 = v_0v_1\ldots v_{n/2-1}$ and $C_2 = w_0w_1\ldots w_{n/2-1}$ and the edge $v_0w_0$.
Note that $C_1\cup C_2$ forms a consecutive permutation $2$-factor. Let $G$ be a (sub)cubic supergraph of $G_0$ of order $n$ which contains a consecutive permutation $2$-factor. 

If $G$ is cubic then it is a cycle permutation graph. It should be output and the recursion backtracks. If not, we determine all consecutive permutation $2$-factors in order to find the eligible pairs of vertices. In practice, we keep track of all consecutive permutation $2$-factors dynamically for efficiency reasons. More specifically: after adding an edge, we remove those which have become non-consecutive or are no longer permutation $2$-factors and search for consecutive permutation $2$-factors containing the newly added edge. While constructing the induced cycles of such a $2$-factor they are stored in a bitset, so that we can use bit operations to determine efficiently whether or not they have chords. This search for new consecutive permutation $2$-factors containing the most recently added edge is one of the bottlenecks of the algorithm.

Given all consecutive permutation $2$-factors, it is straightforward to determine all eligible pairs of vertices. We then need to determine the orbits of these pairs in order to only add an edge between one of the vertex pairs of each orbit, since all pairs in the same orbit would lead to isomorphic graphs. To obtain these orbits we use the program \texttt{nauty}~\cite{MP14} for determining all generators of the automorphism group of $G$. The orbits are then determined using a union-find algorithm. We choose the representative of each orbit to be the first vertex pair of that orbit we encountered while searching for eligible vertex pairs.

Once we have determined that an eligible vertex pair $(u,v)$ is the representative of its orbit, we need to determine whether its addition to $G$ will be a canonical expansion. In practice, we do this by looking at the graph $G+uv$ and analysing its \emph{reducible} edges. These are the edges $e$ for which $G + uv - e$ contains a consecutive permutation $2$-factor. These are however easily obtained once one knows all consecutive permutation $2$-factors of $G + uv$. Other edges are not reducible and can be ignored since removing them does not give us a graph which needs to be generated.

In order to determine whether or not we should accept $G + uv$ we need to define a canonical reduction which is unique up to isomorphism. The edge whose removal yields the canonical reduction will be the \emph{canonical edge}. To this end, we assign a $10$-tuple $(x_0, \ldots, x_9)$ to each reducible edge and let the canonical edge be the one with the lexicographically maximal value for this $10$-tuple.  

For a reducible edge $e=ab$, the values $x_0,\ldots, x_7$ are invariants of increasing discriminating power and cost, determined empirically. They are defined as follows: 
\begin{itemize}
    \item $x_0$ is the negative of the number of vertices at distance at most $2$ from $a$ or $b$. 
    \item $x_1$ ($x_2$) is the negative of the number of $4$-cycles ($5$-cycles) containing the edge $ab$.  
    \item $x_3$ is the number of vertices at distance at most $3$ from $a$ or $b$.
    \item $x_4$ is the negative of the number of $6$-cycles containing the edge $ab$. 
    \item $x_5$ ($x_6$) is (the negative of) the number of vertices of degree $2$ at distance $1$ (at most $2$) of $a$ and of $b$. 
    \item $x_7$ is the number of vertices at distance at most $4$ from $a$ or $b$. 
\end{itemize}
We note that while the discriminating power is negligible for the later invariants, they are necessary for the efficiency of variants of this program, such as the generation of non-hamiltonian cycle permutation graphs or cycle permutation graphs with girth restrictions. See Section~\ref{sec:results}. Moreover, their presence here does not noticeably increase the running time of the algorithm.

While the above values are invariant under isomorphism, their values could be the same for non-isomorphic graphs. Therefore we define $\{x_8, x_9\}$ to be the lexicographically largest label of an edge which is in the same edge orbit as $e$ in the canonical labelling of the graph. We again use \texttt{nauty}~\cite{MP14} for determining this canonical labelling of the graph. This gives us the generators of the automorphism group for free, hence if we accept this expansion, we take care not to call \texttt{nauty} a second time.

Note that in theory we could define the canonical edge using only $x_8$ and $x_9$, however, as calling \texttt{nauty} is computationally expensive, it is much more efficient to compute the other invariants first. Once such an invariant is able to show that our added edge $uv$ is not canonical, for example if there is a reducible edge with fewer vertices at distance at most $2$, then we can reject the expansion immediately and do not need to compute the remaining invariants (including the expensive call to \texttt{nauty}). Similarly, the invariants can be sufficient to determine if our added edge is the only reducible edge with maximum value for the tuple and hence it is canonical. The expansion can then immediately be accepted without computing the remaining invariants. As an example, on order $24$, invariants $x_0,\ldots, x_7$ are sufficient to discriminate the canonical edge in $94.64\%$ of the cases. If we were to only consider $x_8$ and $x_9$ for this order, this would slow down the total computation by a factor of $5$.

One of the bottlenecks of our algorithm is the search for new consecutive permutation $2$-factors containing the most recently added edge. This needs to happen after adding an eligible non-edge $e$ in order to determine all reducible edges and find out whether the addition of $e$ was a canonical expansion. We try to avoid this as much as possible. Suppose we are considering intermediate graph $G$ and eligible non-edge $e$. One of the ways we avoid this search is by first only removing the $2$-factors of $G$ which have become non-consecutive or which are no longer permutation $2$-factors in $G+e$ and determining a subset of the reducible edges based on this subset of consecutive permutation $2$-factors of $G+e$. One can then already compute the values $x_0,\ldots, x_9$ for this subset of reducible edges and one might be able to deduce whether $e$ is non-canonical. If this is the case, we can prune here.

If $e$ is not yet determined to be non-canonical then either it is canonical, or there is another consecutive permutation $2$-factor yielding new reducible edges, one of which is a canonical edge. Before starting the search for these $2$-factors, we first check the condition given by Proposition~\ref{prop:new2factors}.

If we find that this condition holds, then we have already computed all reducible edges in the previous step and determined whether or not $e$ is canonical. Otherwise, we will need to perform the algorithm for finding a consecutive permutation $2$-factor containing $e$. We can do this in the same way as was done in Algorithm~\ref{alg:recursive_method_orderly}.


\begin{theorem}\label{thm:ccpm_correctness}
    For a given order $n$, Algorithm~\ref{alg:recursive_method} will output all pairwise non-isomorphic cycle permutation graphs of order $n$ exactly once. 
\end{theorem}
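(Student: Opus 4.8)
The plan is to verify the two defining rules of the canonical construction path method directly in our setting, thereby establishing \emph{completeness} (every cycle permutation graph of order $n$ is output at least once) and \emph{uniqueness} (no isomorphic copy is output twice). First I would record the elementary facts that make the search well founded: each expansion strictly increases the number of edges, so the recursion terminates after at most $3n/2$ additions; the start graph $G_0$ is a genuine intermediate object, since its two single degree-$3$ vertices each trivially induce a path and $C_1\cup C_2$ is a consecutive permutation $2$-factor; and a graph is output precisely when it is cubic, in which case it has no degree-$2$ vertices, hence no eligible pairs and no further children. Throughout I would use the notion of \emph{canonical reduction}: deletion of the canonical edge, i.e.\ the reducible edge of lexicographically maximal $10$-tuple. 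The essential structural observation is that this $10$-tuple is constant on each edge orbit and separates distinct orbits (the $x_0,\dots,x_7$ are isomorphism invariants and $\{x_8,x_9\}$ is the largest label in the orbit under \texttt{nauty}'s canonical labelling), so the canonical edge forms a single, isomorphism-invariant orbit; consequently the canonical reduction is well defined up to isomorphism.

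For uniqueness I would argue by induction on the number of added edges that each isomorphism class of reachable objects is accepted exactly once. Rule~\ref{rule:CCPM_1} forces a unique parent: any accepted $H$ has canonical edge $e$, so it can only have been produced from its canonical reduction $H-e$. Given two accepted graphs $H\cong H'$ in one class, isomorphism invariance of the canonical reduction yields $H-e\cong H'-e'$, so by induction both descend from the same accepted representative $P$; within the expansion of $P$, any two eligible additions with $P+f_1\cong P+f_2\cong H$ that are both canonical must have $f_1,f_2$ in the canonical edge orbit, and the isomorphism carrying $P+f_1$ to $P+f_2$ restricts to an automorphism of $P$ taking $f_1$ to $f_2$; hence $f_1,f_2$ lie in one orbit of eligible pairs and Rule~\ref{rule:CCPM_2} accepts only one of them. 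This yields ``at most once'', which combined with completeness gives ``exactly once''.

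For completeness I would fix a cycle permutation graph $G$ and follow its chain of canonical reductions $G=H_0,H_1,\dots$, where $H_{i+1}$ is $H_i$ minus its canonical edge. Since deleting a reducible edge preserves a consecutive permutation $2$-factor and lowers the number of rungs by one, after $n/2-1$ steps the chain reaches a graph with a single rung, namely $G_0$; each $H_i$ is therefore an object. The crux is to show that every step is realised by the algorithm, i.e.\ that the reverse of a canonical reduction is an \emph{eligible} expansion. Here the structural meaning of eligibility is decisive: an eligible addition turns an $S_i$ vertex into a degree-$3$ vertex adjacent to the degree-$3$ path on $C_i$, extending that path by one, and conversely deleting a rung incident to a tip of the path shortens it while keeping the $2$-factor consecutive and makes the deleted pair eligible in the reduced graph. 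Thus I would isolate the key lemma that an edge $e=ab$ is reducible in an object $H$ if and only if $H-e$ admits a consecutive permutation $2$-factor with respect to which $(a,b)$ is eligible; the forward implication is immediate, and the reverse amounts to exhibiting, for any reducible $e$, a witnessing consecutive permutation $2$-factor in which $e$ is such a tip-rung. Granting this lemma, $H_i=H_{i+1}+e$ is an eligible expansion of $H_{i+1}$, so $G$ is produced along its canonical path (up to the orbit choice of Rule~\ref{rule:CCPM_2}, which only replaces $G$ by an isomorphic copy), finishing the argument.

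The main obstacle is exactly this last lemma: reconciling the efficiency-motivated restriction to eligible pairs with the canonical choice of reduction. The danger is a reducible edge that is a chord of \emph{every} witnessing permutation $2$-factor of $H-e$, whose reinsertion is then not an eligible expansion. I would rule this out using that every reachable object carries a consecutive permutation $2$-factor all of whose extra edges are rungs—no eligible addition ever creates a chord—and that the two newly degree-$2$ endpoints $a,b$ must occupy, in some consecutive permutation $2$-factor of $H-e$, positions on the degree-$2$ arcs adjacent to the degree-$3$ path. Verifying that such a witnessing $2$-factor always exists, equivalently that the reducible edges are precisely the tip-rungs of the consecutive permutation $2$-factors, is the heart of the matter; everything else is bookkeeping around the two canonical construction path rules.
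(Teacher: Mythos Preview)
Your overall plan is exactly the two–part canonical construction path argument the paper gives: completeness by following the chain of canonical reductions from an arbitrary cycle permutation graph down to $G_0$, and uniqueness by comparing the construction sequences of a hypothetical pair of isomorphic outputs. The paper's proof is your induction written more tersely; in particular it also asserts, without further comment, that ``adding the image of $e$ under the isomorphism will be the canonical expansion'' performed by the algorithm—i.e.\ it silently assumes precisely the eligibility point you single out as ``the heart of the matter''. So structurally you are aligned with the paper and in fact more explicit about where the real work lies.

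Where your plan would stall is the specific lemma you intend to isolate, that reducible edges coincide with tip-rungs, equivalently that a reducible edge always has endpoints forming an eligible pair in the reduced graph. This equivalence is false. Take $n=12$ with cycles $D_1=v_0\cdots v_5$, $D_2=w_0\cdots w_5$ and let $H$ carry the rungs $v_0w_0,\,v_1w_2,\,v_2w_4,\,v_3w_1$. Then $F=D_1\cup D_2$ is the \emph{unique} $2$-factor of $H$ and is consecutive via $D_1$. The edge $e=v_2w_4$ is reducible—after its removal the degree-$3$ vertices on $D_2$ are $w_0,w_1,w_2$, a path, so $F$ remains a consecutive permutation $2$-factor of $H-e$—yet $e$ is not a tip-rung of $F$ in $H$, and $F$ is again the unique $2$-factor of $H-e$, with $(v_2,w_4)$ \emph{not} eligible with respect to it: the degree-$3$ vertices $v_0,v_1,v_3$ on $D_1$ do not form a path, so $S_1=\emptyset$, while $w_4\notin S_2=\{w_3,w_5\}$. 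Hence the forward implication you call ``immediate'' actually fails, and with it the clean bridge between ``reducible'' and ``eligible'' that both your argument and the paper's tacitly need. Closing this would require either redefining the canonical edge to range only over edges whose reinsertion is an eligible expansion (which is what the implementation effectively does when it reads off reducible edges from the consecutive permutation $2$-factors of $G+uv$), or proving that the particular invariants $x_0,\dots,x_9$ always select such an edge—neither of which the paper supplies.
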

\begin{proof}
    Let $G$ be a cycle permutation graph of order $n$. We show that it will be output by the algorithm. Let $G_m$ be a (sub)cubic subgraph of $G$ containing a consecutive permutation $2$-factor $F$ with $n+1 < m \leq 3n/2$ edges such that if it (or a graph isomorphic to it) is accepted by the algorithm, then $G$ (or a graph isomorphic to $G$) will be output. It is easy to see that if $G_{3n/2}$ is isomorphic to $G$ and gets accepted that (a graph isomorphic to) $G$ will be output by the algorithm. Since $G_m$ contains $F$, it also contains reducible edges and one of them must be the canonical edge, say $e$. Hence, if a graph isomorphic to $G_m - e$ was accepted by the algorithm, adding the image of $e$ under the isomorphism will be the canonical expansion, and a graph isomorphic to $G_m$ will be accepted. Finally, we complete the argument by noting that $G_{n+2} - e$, where $e$ is the canonical edge of $G_{n+2}$, is isomorphic to the starting graph of our algorithm, which shows that $G_{n+2}$ will be accepted.

    Conversely, we show that if $G$ is output by the algorithm it is the only graph of its isomorphism class which is output. Suppose that $G'$ is also output and is isomorphic to $G$. Let $$G_{n/2+1}, G_{n/2+2}, \ldots, G_{3n/2} = G$$ be the sequence of graphs with consecutive permutation $2$-factors where $G_m$ has $m$ edges and $G_{m-1} = G_{m} - e$ where $e$ is the canonical edge of $G_m$ which were accepted by the algorithm to eventually output $G$. Define $$G'_{n/2+1}, G'_{n/2+2}, \ldots, G'_{3n/2} = G'$$ similarly. Then either, for all $m\in \{n/2+1, \ldots, 3n/2\}$ we have that $G_m$ is isomorphic to $G'_m$. However, since $G_{n/2+1} = G'_{n/2+1}$ and there must be an $m'$ for which $G_{m'}\neq G'_{m'}$ as labelled graphs, we have performed an expansion using an eligible vertex pair which was not the representative of its orbit, which gives a contradiction. Hence, we have that there must be an $m'$ such that $G_{m}$ is isomorphic to $G'_{m}$ for all $m' < m \le 3n/2$, but with $G_{m'}$ and $G'_{m'}$ not isomorphic. Let $G_{m'+1} - e = G_{m'}$ and $G'_{m'+1} - e' = G'_{m'}$, then both $e$ and $e'$ must be canonical edges in their respective graphs and since $G_{m'+1}$ is isomorphic to $G'_{m'+1}$ there must be an isomorphism mapping these edges onto each other. However, this isomorphism then gives rise to an isomorphism between $G_{m'}$ and $G'_{m'}$, which is a contradiction. Hence, we conclude that the algorithm will only output one graph of each isomorphism class.
\end{proof}

\section{Correctness testing}\label{app:correctness_tests}

Next to proving the correctness of the algorithms, it is also important to verify the correctness of their implementations. To this end, we created a second implementation for generating all pairwise non-isomorphic (non-hamiltonian) cycle permutation graphs and permutation snarks based on the canonical construction path method of a given order. Its description can be found in Appendix~\ref{app:ccpm} and a rough outline of its recursive method is given in Algorithm~\ref{alg:recursive_method}. 

Firstly, we verified that the implementations of Algorithm~\ref{alg:recursive_method_orderly} and Algorithm~\ref{alg:recursive_method} yield the same results. All counts we checked in this way were in agreement. For cycle permutation graphs this was checked up to order $34$. For cycle permutation graphs of girth at least $5$ up to order $34$, of girth at least $6$ up to order $38$, of girth at least $7$ up to order $42$, of girth at least $8$ up to order $50$ and of girth at least $9$ up to order $64$. Variations of these algorithms allow us to generate non-hamiltonian cycle permutation graphs. We compared the results for non-hamiltonian cycle permutation graphs of girth at least $5$ up to order $42$, of girth at least $6$ up to order $42$, for girth at least $7$ up to order $50$, for girth at least $8$ up to order $56$, and for girth at least $9$ up to order $68$ and also here all counts were in agreement. 

Secondly, we wrote a filter program, which given a cubic graph as input, determines whether it has a permutation $2$-factor, by generating all perfect matchings of the graph, looking at their complement and verifying whether the corresponding 2-factor consists of two chordless cycles. Its implementation is open source and can be found in the same repository as our other programs on GitHub~\cite{GRV24}. We then combined this with a generator for cubic graphs in order to determine the counts of cycle permutation graphs. We used \texttt{snarkhunter}~\cite{BG17,BGM11}, which is a state-of-the-art generator for cubic graphs which allows to specify a lower bound on the girth. 

Using this generator-filter approach we verified the counts of cycle permutation graphs up to order $26$. For cycle permutation graphs with girth at least $5$, we verified this up to order $30$. For girth at least $6$, we verified up to order $34$ and for girth at least $7$ we verified up to order $38$. For cubic graphs of girth at least $8$ or $9$, we obtained these graphs from the meta-directory at the House of Graphs~\cite{CDG23} at \url{https://houseofgraphs.org/meta-directory/cubic}. We then used the filter on these graphs and verified the counts up to order $46$ and $64$, respectively. 

Another way we verified our counts is by using an algorithm that uses isomorphism-by-lists (i.e.\ the memory intensive approach mentioned in the first paragraph of Section~\ref{sec:algorithm}) instead of the orderly generation or canonical construction path method. Using this method, we also verified the counts of cycle permutation graphs up to order $28$, for girth at least $5$ also up to order $28$, for girth at least $6$ up to order $32$, for girth at least $7$ up to order $36$, for girth at least $8$ up to $46$ and for girth at least $9$ up to $62$. 

We have also used alternative implementations to verify the counts of non-hamiltonian cycle permutation graphs. One of them uses isomorphism-by-lists and a backtracking approach for checking hamiltonicity which uses the implementation that can be found on GitHub in~\cite{GRWZ22soft} and which was developed for the paper~\cite{GRWZ22}, instead of \texttt{cubhamg} (see Section~\ref{sec:non-ham}). Using this approach we verified the counts of non-hamiltonian cycle permutation graphs up to order $34$, with girth at least $6$ up to order $38$, with girth at least $7$ up to order $44$, with girth at least $8$ up to order $52$ and with girth at least $9$ up to order $64$. 

A second implementation uses the canonical construction path method as well as this method for checking the hamiltonicity of the intermediate graphs. With this implementation we verified the counts of non-hamiltonian cycle permutation graphs up to order $40$, with girth at least $6$ up to order $36$, with girth at least $7$ up to order $44$, with girth at least $8$ up to order $52$ and with girth at least $9$ up to order $64$. (Note that we allotted a lot more computation time to the generation of general non-hamiltonian cycle permutation graphs using this method, which is why we were able to verify this up to such a large order when compared to the generation of these graphs with girth restrictions.)

For all cases described above, the obtained results were in complete agreement, which gives a lot of confidence about the correctness of our implementation. Moreover, our implementations of these algorithms are open source software and can be found on GitHub~\cite{GRV24} where they can be verified and used by other researchers.

\clearpage

\section{Optimisations for determining canonicity}\label{app:orderly_optimisations}
When determining whether or not a sequence $p$ representing $G$ via a permutation $2$-factor $F$ is canonical during a run of Algorithm~\ref{alg:recursive_method_orderly}, we use several optimisations to speed up this process.

We first note that, in practice, we work with the difference lists of the sequences representing the graphs rather than the sequences themselves. While this slightly changes whether or not a labelling is canonical, the resulting graphs output by the algorithm will still be the same. 
This representation makes it easier to look at a sequence and see the effect of its rotations.

We define the difference list of a sequence $p$, where $k$ is the length of $p$, as 

\[d(p):[k]\rightarrow[k]\cup\{{?}\}: i\mapsto
\begin{cases}
   ? &\text{if $p(i) =\,?$ or $p(i+1) =\,?$,}\\
   p(i+1) - p(i)& \text{if $p(i) < p(i+1)$ or}\\
   k + p(i+1) - p(i)& \text{otherwise,}
\end{cases}
\]
taking indices modulo $k$.

Suppose we encounter $p$ in our algorithm and its first $m$ entries are filled, i.e.\ not equal to ${?}$, and the remaining entries are ${?}$. We use the knowledge that $p\vert_{m-1}$ was canonical to speed up some of the computations of determining whether $p$ is canonical. 

Let $d:= d(p)$ be the difference list of $p$ and let $F$ be the natural $2$-factor obtained from the labelling $G(p)$. When computing whether or not $d$ is canonical, we first check if it is weakly canonical before checking whether it is lexicographically smaller than sequences representing $G$ via a different permutation $2$-factor.
In order to check its weak canonicity, we need to check the rotations of eight lists, namely the $k$ rotations of $d$, the $k$ rotations of $d$ in the opposite direction, the $k$ rotations of the list where we subtract each element of $d$ from $k$, the $k$ rotations of taking both the opposite direction and subtracting and the $k$ rotations of the four previous lists, but for $d(I(p))$ instead of $d$. Note that $I(p)$ corresponds to swapping the cycles of $F$.

A naive approach would check in linear time for each of these lists whether or not $d$ is lexicographically smaller. However, for each of the lists we dynamically keep track which subsequences whose next element is ${?}$ are also a prefix of $d$. This allows us to often determine canonicity using only one comparison for each such subsequence. On average, there seem to be around 2 subsequences that match with $d$ distributed over the 8 lists, so in practice the canonicity test is almost free. We illustrate this with an example.

Let $p = 0,2,6,3,5,1,?,?$, then $d(p) = 2,4,5,2,4,?,?,?$. Suppose we want to compare $d(p)$ to all of its rotations. Since the final $4$ of $d(p)$ was a ${?}$ in the previous iteration (i.e.\ for $d(p\vert_{5})$), we store ``$2$'' as a subsequence which is a prefix of $d(p)$. We also store the empty sequence as a subsequence. Since the empty sequence has length $0$, we compare the first element of $d(p)$, i.e.\ $2$, to the new $4$. Since $2 < 4$ we cannot determine that $d(p)$ is not canonical.
Since the sequence ``$2$'' has length $1$, we compare the second element of $d(p)$, i.e.\ $4$ to the new $4$. If it were larger, then there is a rotation yielding a lexicographically smaller sequence than $d(p)$, if it were smaller, then this rotation is not lexicographically smaller and with the new element we cannot extend this subsequence to a new one which is still a prefix. Since in this case, the elements are equal and the $4$ is proceeded by a ${?}$, we extend the subsequence ``$2$'', to  ``$2,4$'' a new subsequence which is also a prefix of $d(p)$. We have now checked all rotations of $d(p)$ using only two comparisons. The same principle can be applied to the seven remaining lists we need to check. Using this optimisation to generate cycle permutation graphs using the weak orderly generation approach, the runtime decreases approximately by a factor of $3$ for orders $26$ and $28$, when compared to an approach that linearly determines the rotation of the list which is lexicographically minimal and pruning as soon as we can decide it will be smaller than $d(p)$. 

\section{Runtimes and counts for the generation of cycle permutation graphs}\label{app:runtimes_general}

\begin{table}[!htb]
    \centering
    \begin{tabular}{c || r | r | r | r | r | r}
        Order & $g\geq 4$ & $g\geq 5$ & $g\geq 6$ & $g\geq 7$ & $g\geq 8$ & $g\geq 9$ \\\hline
        $24$    & $1.89$s                   & $<1$s                 & $<1$s                 & $<1$s                   & $<1$s                   & $<1$s\\
        $26$    & $30.16$s                  & $3.98$s               & $<1$s                 & $<1$s                   & $<1$s                   & $<1$s\\
        $28$    & $533.48$h                 & $73.71s$s             & $<1$s                 & $<1$s                   & $<1$s                   & $<1$s\\
        $30$    & \bf$\bm{3.93}$h           & $1420.06$s            & $6.70$s               & $<1$s                   & $<1$s                   & $<1$s\\
        $32$    & \bf$\bm{82.86}$h          & \bf$\bm{10.89}$h      & $152.78$s             & $<1$s                   & $<1$s                   & $<1$s\\
        $34$    & \bf$\bm{0.22}$yrs         & \bf $\bm{248.10}$h    & \bf$\bm{1.41}$h       & $<1$s                   & $<1$s                   & $<1$s\\
        $36$    & /                         & \bf $\bm{0.68}$yrs    & \bf$\bm{36.59}$h      & $1.60$s                 & $<1$s                   & $<1$s\\
        $38$    & /                         & /                     & \bf$\bm{0.11}$yrs     & $60.29$s                & $<1$s                   & $<1$s\\
        $40$    & /                         & /                     & /                     & $2702.68$s              & $<1$s                   & $<1$s\\
        $42$    & /                         & /                     & /                     & \bf$\bm{40.19}$h        & $<1$s                   & $<1$s\\
        $44$    & /                         & /                     & /                     & \bf$\bm{0.19}$yrs       & $<1$s                   & $<1$s\\
        $46$    & /                         & /                     & /                     & /                       & $4.00$s                 & $<1$s\\
        $48$    & /                         & /                     & /                     & /                       & $257.71$s               & $<1$s\\
        $50$    & /                         & /                     & /                     & /                       & \bf$\bm{6.09}$h         & $<1$s\\
        $52$    & /                         & /                     & /                     & /                       & \bf$\bm{455.49}$h       & $<1$s\\
        $54$    & /                         & /                     & /                     & /                       & /                       & $<1$s\\
        $56$    & /                         & /                     & /                     & /                       & /                       & $<1$s\\
        $58$    & /                         & /                     & /                     & /                       & /                       & $<1$s\\
        $60$    & /                         & /                     & /                     & /                       & /                       & $1.57$s\\
        $62$    & /                         & /                     & /                     & /                       & /                       & $70.28$s\\
        $64$    & /                         & /                     & /                     & /                       & /                       & $1.30$h\\
        $66$    & /                         & /                     & /                     & /                       & /                       & \bf$\bm{203.94}$h\\

    \end{tabular}
    \caption{Runtimes for computing the entries in Table~\ref{tab:counts_cycle_permutation_graphs} using the (strong) orderly generation method. Bold entries were performed by parallelising the computation (which introduces some overhead) and executing it on the supercomputer of the Flemish supercomputer center (VSC), using Intel Xeon Platinum 8360Y (IceLake) CPUs.}
    \label{tab:runtimes_cycle_permutation_graphs}
\end{table}

\begin{table}[!htb]
    \centering
    \setlength\tabcolsep{1.1mm}
    \begin{tabular}{c||r| r | r || r | r | r}
        Order & \multicolumn{1}{c}{Alg.~\ref{alg:recursive_method_orderly} counts} & \multicolumn{1}{|c}{Alg.~\ref{alg:recursive_method_weak_orderly} counts} & \multicolumn{1}{|c||}{Rel. (\%)} & Time Alg.~\ref{alg:recursive_method_orderly} & Time Alg.~\ref{alg:recursive_method_weak_orderly} & Speedup\\\hline
        6 & 1 & 1 & 100.00 & $<1$s & $<1$s & / \\
        8 & 2 & 2 & 100.00 & $<1$s & $<1$s & /\\
        10 & 4 & 4 & 100.00 & $<1$s & $<1$s & /\\
        12 & 10 & 10 & 100.00 & $<1$s & $<1$s & /\\
        14 & 28 & 28 & 100.00 & $<1$s & $<1$s & /\\
        16 & 123 & 127 & 96.85 & $<1$s & $<1$s & /\\
        18 & 667 & 686 & 97.23 & $<1$s & $<1$s & /\\
        20 & 4\,815 & 4\,975 & 96.78 & $<1$s & $<1$s & /\\
        22 & 41\,369 & 42\,529 & 97.27 & $<1$s & $<1$s & /\\
        24 & 411\,231 & 420\,948 & 97.69 & 1.89s & 0.45s &                                  4.20\\
        26 & 4\,535\,796 & 4\,622\,509 & 98.12 & 30.16s & 4.79s &                           6.30\\
        28 & 54\,828\,142 & 55\,670\,332 & 98.49 & 533.48s & 62.07s &                       8.59\\
        30 & 717\,967\,102 & 726\,738\,971 & 98.79 & \bf3.93h & 861.05s &                   16.43\\
        32 & 10\,118\,035\,593 & 10\,217\,376\,792 & 99.03 & \bf82.86h & \bf5.02h &         16.50\\
        34 & 152\,626\,831\,184 & 153\,848\,448\,652 & 99.21 & \bf 0.22yrs & \bf95.86h &    19.81\\
        36 & ? & 2\,470\,073\,249\,960 & / & / & \bf0.15yrs &                        /\\
    \end{tabular}
    \caption{The number of cycle permutation graphs output by the implementations of Algorithms~\ref{alg:recursive_method_orderly}~and~\ref{alg:recursive_method_weak_orderly} using the strong and weak orderly generation methods (resp.\ second and third column). The fourth column displays the percentage of non-isomorphic graphs present in the third column. The fifth and sixth column display the running times to obtain these counts of Algorithm~\ref{alg:recursive_method_orderly} and \ref{alg:recursive_method_weak_orderly}, respectively. The final column displays by what factor the sixth column is faster than the fifth. Bold entries were performed by parallelising the computation (which introduces some overhead) and executing it on the Flemish supercomputer center (VSC), using Intel Xeon Platinum 8360Y (IceLake) CPUs.}
    \label{tab:counts_orderly_generation}
\end{table}

\clearpage
\section[Runtimes and counts for the generation of non-hamiltonian cycle permutation graphs and permutation snarks]{Runtimes and counts for the generation of\\ non-hamiltonian cycle permutation graphs and\\ permutation snarks}\label{app:runtimes_nonham}

\begin{table}[!htb]
    \centering
    \begin{tabular}{c || r | r || r | r | r | r}
        Order & $g\geq 5$  & $\chi' = 4$ & $g\geq 6$ & $g\geq 7$ & $g\geq 8$ & $g\geq 9$\\\hline
        $30$    & $3.08$s                  & $<1$s                 & $<1$s                 & $<1$s                 & $<1$s                 & $<1$s\\
        $32$    & $25.21$s                 & $4.43$s               & $<1$s                 & $<1$s                 & $<1$s                 & $<1$s\\
        $34$    & $233.91$s                & $40.11$s              & $3.98$s               & $<1$s                 & $<1$s                 & $<1$s\\
        $36$    & \bf$\bm{1.01}$h          & $275.93$s             & $35.34$s              & $<1$s                 & $<1$s                 & $<1$s\\
        $38$    & \bf$\bm{8.52}$h          & \bf $\bm{1.48}$h      & \bf$\bm{0.33}$h       & $<1$s                 & $<1$s                 & $<1$s\\
        $40$    & \bf$\bm{77.89}$h         & \bf $\bm{10.91}$h     & \bf$\bm{1.93}$h       & $2.83$s               & $<1$s                 & $<1$s\\
        $42$    & \bf$\bm{775.81}$h        & \bf $\bm{115.22}$h    & \bf$\bm{16.13}$h      & $28.49$s              & $<1$s                 & $<1$s\\
        $44$    & \bf $\bm{0.75}$yrs      & \bf $\bm{0.11}$yrs    & \bf$\bm{132.93}$h     & $316.74$s              & $<1$s                 & $<1$s\\
        $46$    & \bf $\bm{8.16}$yrs 
                                            & \bf $\bm{1.30}$yrs    & \bf $\bm{0.17}$yrs   & \bf$\bm{1.78}$h       & $<1$s                 & $<1$s\\
        $48$    & /                         & /                     & \bf $\bm{1.92}$yrs   & \bf$\bm{15.73}$h      & $1.16$s               & $<1$s\\
        $50$    & /                         & /                     & /                    & \bf $\bm{174.16}$h    & $14.83$s              & $<1$s\\
        $52$    & /                         & /                     & /                    & \bf $\bm{0.24}$yrs    & $236.57$s             & $<1$s\\
        $54$    & /                         & /                     & /                    & \bf $\bm{2.94}$yrs              & \bf $\bm{1.66}$h      & $<1$s\\
        $56$    & /                         & /                     & /                    & /                     & \bf $\bm{21.86}$h     & $<1$s\\
        $58$    & /                         & /                     & /                    & /                     & \bf $\bm{317.83}$h    & $<1$s\\
        $60$    & /                         & /                     & /                    & /                     & \bf $\bm{0.62}$yrs    & $0.73$s\\
        $62$    & /                         & /                     & /                    & /                     & /                     & $12.21$s\\
        $64$    & /                         & /                     & /                    & /                     & /                     & $258.18$s\\
        $66$    & /                         & /                     & /                    & /                     & /                     & \bf$\bm{2.81}$h\\
        $68$    & /                         & /                     & /                    & /                     & /                     & \bf $\bm{54.14}$h\\
        $70$    & /                         & /                     & /                    & /                     & /                     & \bf $\bm{0.12}$yrs\\

    \end{tabular}
    \caption{Runtimes for computing the entries in Table~\ref{tab:counts_nonham_cycle_permutation_graphs} using Algorithms~\ref{alg:recursive_method_weak_orderly_non-ham}~and~\ref{alg:recursive_method_weak_orderly_snark} based on weak orderly generation. Bold entries were performed by parallelising the computation (which introduces some overhead) and executing it on the Flemish supercomputer center (VSC), using Intel Xeon Platinum 8360Y (IceLake) CPUs.}
    \label{tab:runtimes_nonham_cycle_permutation_graphs_orderly}
\end{table}

\begin{table}[!htb]
    \centering
    \begin{tabular}{c || r | r | r | r | r}
        Order & $g\geq 5$ & $g\geq 6$ & $g\geq 7$ & $g\geq 8$ & $g\geq 9$\\\hline
        $26$    & $3.18$s                   & $<1$s                 & $<1$s                 & $<1$s                 & $<1$s\\
        $28$    & $26.76$s                  & $0.49$s               & $<1$s                 & $<1$s                 & $<1$s\\
        $30$    & $261.91$s                 & $4.59$s               & $<1$s                 & $<1$s                 & $<1$s\\
        $32$    & \bf$\bm{1.43}$h           & $43.99$s              & $<1$s                 & $<1$s                 & $<1$s\\
        $34$    & \bf$\bm{9.84}$h           & $429.50$s             & $<1$s                 & $<1$s                 & $<1$s\\
        $36$    & \bf$\bm{105.20}$h         & \bf$\bm{3.61}$h       & $1.71$s               & $<1$s                 & $<1$s\\
        $38$    & \bf$\bm{1062.77}$h        & \bf$\bm{26.53}$h      & $19.10$s              & $<1$s                 & $<1$s\\
        $40$    & \bf$\bm{1.41}$yrs         & \bf$\bm{251.97}$h     & $224.90$s             & $<1$s                 & $<1$s\\
        $42$    & \bf$\bm{17.66}$yrs        & \bf$\bm{3412.92}$h    & \bf$\bm{2.89}$h       & $<1$s                 & $<1$s\\
        $44$    & /                         & /                     & \bf$\bm{20.98}$h      & $0.50$s               & $<1$s\\
        $46$    & /                         & /                     & \bf$\bm{261.32}$h     & $6.02$s               & $<1$s\\
        $48$    & /                         & /                     & \bf$\bm{3041.08}$h    & $89.93$s              & $<1$s\\
        $50$    & /                         & /                     & \bf$\bm{5.24}$yrs     & $1480.43$s            & $<1$s\\
        $52$    & /                         & /                     & /                     & \bf$\bm{13.26}$h      & $<1$s\\
        $54$    & /                         & /                     & /                     & \bf $\bm{204.36}$h    & $<1$s\\
        $56$    & /                         & /                     & /                     & \bf $\bm{3448.13}$h   & $0.15$s\\
        $58$    & /                         & /                     & /                     & /                     & $2.17$s\\
        $60$    & /                         & /                     & /                     & /                     & $39.36$s\\
        $62$    & /                         & /                     & /                     & /                     & $838.34$s\\
        $64$    & /                         & /                     & /                     & /                     & \bf$\bm{11.60}$h\\
        $66$    & /                         & /                     & /                     & /                     & \bf$\bm{275.49}$h\\
        $68$    & /                         & /                     & /                     & /                     & \bf$\bm{6185.93}$h\\

    \end{tabular}
    \caption{Runtimes for computing the entries in Table~\ref{tab:counts_nonham_cycle_permutation_graphs} using Algorithm~\ref{alg:recursive_method} based on the canonical construction path method. Bold entries were performed by parallelising the computation (which introduces some overhead) and executing it on the Flemish supercomputer center (VSC), using Intel Xeon Platinum 8360Y (IceLake) CPUs. 
    }
    \label{tab:runtimes_nonham_cycle_permutation_graphs}
\end{table}

\begin{table}[!htb]
    \centering
	\begin{adjustbox}{max width=\textwidth}
        \begin{tabular}{c||c|c|c||c|c|c||}
            Order & {Non-ham.\ counts} & {Alg.~\ref{alg:recursive_method_weak_orderly_non-ham} counts} & Rel. (\%)  & {Snark counts} & {Alg.~\ref{alg:recursive_method_weak_orderly_snark} counts} & Rel. (\%)\\\hline
            10 & 1 & 1 & 100.00 & 1 & 1 & 100.00\\
            12-16 & 0 & 0 & 100.00 & 0 & 0 & 100.00\\
            18 & 2 & 4 & 50.00& 2 & 4 & 50.00\\
            20 & 0 & 0 & 100.00 & 0 & 0 & 100.00\\
            22 & 1 & 1 & 100.00& 0 & 0 & 100.00\\
            24 & 0 & 0 & 100.00 &0 & 0 & 100.00\\
            26 & 64 & 174 & 36.78 & 64 & 174 & 36.78\\
            28 & 0 & 0 & 100.00 & 0 & 0 & 100.00\\
            30 & 9 & 18 & 50.00 & 0 & 0 & 100.00\\
            32 & 0 & 0 & 100.00 & 0 & 0 & 100.00\\
            34 & 10\,778 & 33\,790 & 31.90& 10\,771 & 33\,767 & 31.90\\
            36 & 4 & 23 & 17.39 & 0 & 0 & 100.00\\
            38 & 1\,848 & 5\,124 & 36.07 & 0 & 0 & 100.00\\
            40 & 19 & 196 & 9.69 & 0 & 0 & 100.00\\
            42 & 3\,131\,740& 10\,789\,929 & 29.02 & 3\,128\,893 & 10\,774\,396 & 29.04\\
            44 & 1\,428 & 16\,640 & 8.58 & 0 & 0 & 100.00\\
            46 & 678\,106 & 2\,327\,096 & 29.14 & 0 & 0 & 100.00\\
        \end{tabular}
    \end{adjustbox}
    \caption{The number of non-hamiltonian cycle permutation graphs (permutation snarks) output by the implementation of Algorithm~\ref{alg:recursive_method_weak_orderly} using the weak orderly generation method. These counts are found in the third (sixth) column. The second (fifth) column shows the total number of pairwise non-isomorphic non-hamiltonian cycle permutation graphs (permutation snarks) for each order.}
    \label{tab:non-ham_counts_orderly_generation}
\end{table}

\end{document}